\def\ps@myheadings{\let\@mkboth\@gobbletwo
\def\@oddhead{\hbox{}\hfill \small\sf \rightmark\hskip 19pt{\Large$\cdot$}\hskip 17pt\mypage}
\def\@oddfoot{\hbox{}\hfill\tiny\@runningfoot}
\def\@evenhead{\small\sf\mypage \hskip 17pt{\Large$\cdot$}\hskip 19pt\leftmark\hfill \hbox{}}
\def\@evenfoot{\tiny\@runningfoot\hfill\hbox{}}
\def\sectionmark##1{}\def\subsectionmark##1{}}
\def\@runningfoot{}
\def\runningfoot{\def\@runningfoot{}}
\def\@firstfoot{}
\def\firstfoot{\def\@firstfoot{}}
\def\ps@titlepage{\let\@mkboth\@gobbletwo
\def\@oddhead{}\def\@oddfoot{\hbox{}\hfill
\tiny\@firstfoot}\def\@evenhead{}\def\@evenfoot{\tiny\@firstfoot\hfill\hbox{}}}
\newtheorem{problem}{Problem}
\newcommand{\ds}{\,\mathrm{d}s}
\newcommand{\dxi}{\,\mathrm{d}\xi}
\newcommand{\dz}{\,\mathrm{d}z}
\newcommand{\dtau}{\,\mathrm{d}\tau}
\newcommand{\du}{\,\mathrm{d}u}
\newcommand{\SIGN}{\textrm{sign}}
\newcommand{\Cf}{\mathcal{C}}
\newcommand{\Sf}{\mathcal{S}}
\newcommand{\reffun}[1]{\texttt{\ref{#1}}}
\newcommand{\DEF}{\mathrel{\mathop:}=}
\newcommand{\assign}{\leftarrow}
\newcommand{\ASSIGNl}[1]{\rlap{$#1$}\qquad\assign\,}
\newcommand{\ASSIGNm}[1]{\rlap{$#1$}\quad\;\assign\,}
\newcommand{\ASSIGNs}[1]{\rlap{$#1$}\quad\assign\,}
\newtheorem{theorem}{Theorem}[section]
\newtheorem{lemma}[theorem]{Lemma}
\newtheorem{corollary}[theorem]{Corollary}
\newtheorem{assumption}[theorem]{Assumption}
\newtheorem{definition}[theorem]{Definition}
\begin{document}

% Page heads
\markboth{E.Bertolazzi and M.Frego}{Fast and accurate clothoid fitting}

\title{Fast and accurate $G^1$ fitting of clothoid curves}

\author{
\uppercase{Enrico Bertolazzi} and \uppercase{Marco Frego}
\\
University of Trento, Italy
}

\begin{abstract}
A new effective solution to the problem of Hermite $G^1$ interpolation 
with a clothoid curve is here proposed, that is a clothoid that interpolates two given points in a plane with assigned unit tangent vectors.
The interpolation problem is a system of three nonlinear equations
with multiple solutions which is difficult to solve also numerically.
Here the solution of this system is reduced to the computation of 
the zeros of one single function in one variable.
The location of the zero associated to the relevant solution 
is studied analytically: the interval containing the zero where
the solution is proved to exists and to be unique is provided.
A simple guess function allows to find that zero with very few 
iterations in all possible configurations.

The computation of the clothoid curves and the solution algorithm 
call for the evaluation of Fresnel related integrals.
Such integrals need asymptotic expansions near critical values to avoid loss of precision.
This is necessary when, for example, the solution of the interpolation problem 
is close to a straight line or an arc of circle.
A simple algorithm is presented for efficient computation
of the asymptotic expansion.

The reduction of the problem to a single nonlinear function in one variable
and the use of asymptotic expansions make the present solution algorithm fast and robust.
In particular a comparison with algorithms present in literature shows that 
the present algorithm requires less iterations.
Moreover accuracy is maintained in all possible configurations 
while other algorithms have a loss of accuracy near the transition zones.

\end{abstract}

\category{G.1.2}{Numerical Analysis}{Approximation}

\terms{Algorithms, Performance}

\keywords{%
  Clothoid fitting, Fresnel integrals, Hermite $G^1$ interpolation, Newton--Raphson 
}

%\acmformat{Enrico Bertolazzi, Marco Frego. Fast and accurate clothoid fitting}

%\begin{bottomstuff}
%Author's addresses: Department of Mechanical and Structural Engineering;
%email: enrico.bertolazzi@unitn.it.\\
%\end{bottomstuff}

\maketitle

\section{Introduction}

There are several curves proposed
for Computer Aided Design either ~\cite{Farin:2001,Baran:2010,deBoor:1978},
for trajectories planning of robots and vehicles 
and for geometric roads layout \cite{dececco:2007,Scheuer:1997}.

The most important curves are the clothoids (also known as Euler's or Cornu's spirals), the clothoids splines (a planar curve consisting in clothoid segments), circles and straight lines,\cite{Davis:1999,Meek:1992,Meek:2004,Meek:2009,Walton:2009},
the generalized clothoids or Bezier spirals \cite{Walton:1996}.
Pythagorean Hodograph \cite{Walton:2007,Farouki:1995}, bi-arcs and conic curves are also widely 
used~\cite{Pavlidis:1983}.
It is well known that clothoids are extremely useful despite their transcendental form.

The procedure that allows a curve to interpolate two given points in a plane 
with assigned unit tangent vectors is called $G^1$ Hermite interpolation, while if the curvatures are given at the two points,
then this is called $G^2$ Hermite interpolation \cite{McCrae:2008}.
A single clothoid segment is not enough to ensure $G^2$ Hermite interpolation,
because of the insufficient degrees of freedom. For example
the interpolation problem is solved by using composite clothoid segments in~\cite{Shin:1990}
and using cubic spirals in \cite{Kanayama:1989}.
However, in some applications is enough the cost-effectiveness of a $G^1$
Hermite interpolation \cite{Walton:2009,Bertolazzi2006} especially when the discontinuity of the curvature
is acceptable.\\
The purpose of this paper is to describe a new method for
$G^1$ Hermite interpolation with a single clothoid segment; this method,
which does not require to split the problem in mutually exclusive 
cases (as in \citeNP{Meek:2009}), is fully effective even in case 
of Hermite data like straight lines or circles (see Figure~\ref{fig:1and2} - right). 
These are of course limiting cases but are naturally treated in the present approach.
In previous works the limiting cases are treated separately introducing thresholds. 
The decomposition in mutually exclusive states is just a geometrical fact that helps to understand the problem
but introduces instabilities and inaccuracies which are
absent in the present formulation, as we show in the section of numerical tests.

Finally, the problem of the $G^1$ Hermite interpolation 
is reduced to the computation of a zero of a unique nonlinear equation.
The Newton--Rasphson iterative algorithm is used to accurately compute
the zero and a good initial guess is also derived 
so that few iterations (less than four) suffice.

The article is structured as follows. 
In section~\ref{sec:2} we define the interpolation problem, in section~\ref{sec:3} we describe the passages to reformulate it such that from three equations in three unknowns  it reduces to one nonlinear equation in one unknown. Section~\ref{sec:existence} describes the theoretical aspects of the present algorithm. Here it is proved the existence of the solution and how to select a valid solution among the infinite possibilities. A bounded range where this solution exists and is unique is provided.
Section~\ref{sec:5} is devoted to the discussion of a good starting point for the Newton--Raphson method, so that using that guess, quick convergence is achieved. Section \ref{sec:6} introduces the Fresnel related integrals, e.g. the momenta of the Fresnel integrals. Section \ref{sec:6} analyses the stability of the computation of the clothoid and \textit{ad hoc} expressions for
critical cases are provided. 
Section~\ref{sec:7} is devoted to numerical tests and comparisons with other methods present in literature.
In the Appendix a pseudo-code complete the presented algorithm
for the accurate computation of the Fresnel related integrals.

\section{The fitting problem}\label{sec:2}
%%%
Consider the curve which satisfies the following differential equations:
%%%
\begin{EQ}[rclrcl]\label{eq:ODE:clot}
  x'(s)         &=& \cos \vartheta(s),     \qquad & x(0)&=&x_0,\\
  y'(s)         &=& \sin \vartheta(s),     \qquad & y(0)&=&y_0, \\
  \vartheta'(s) &=& \mathcal{K}(s), \qquad & \vartheta(0)&=&\vartheta_0, \\
\end{EQ}
%%%
where $s$ is the arc parameter of the curve, $\vartheta(s)$ is the direction of 
the tangent $(x'(s),y'(s))$ and $\mathcal{K}(s)$ is the 
curvature at the point $(x(s),y(s))$.
When $\mathcal{K}(s)\DEF\kappa' s + \kappa$, i.e. when the curvature changes linearly,
the curve is called  Clothoid.
As a special case, when $\kappa'=0$ the curve has constant curvature, i.e. is a circle
and when both $\kappa=\kappa'=0$ the curve is a straight line.
%%%
The solution of ODE~\eqref{eq:ODE:clot} is given in the next definition:
%%%
\begin{definition}[Clothoid curve]
The general parametric form of a clothoid spiral curve is the following
%%%
\begin{EQ}[rcl]\label{clot}
  x(s) &=& x_0 + \int_0^s \cos \Big(\frac{1}{2}\kappa'\tau^2+\kappa\tau+\vartheta_0\Big) \dtau, \\
  y(s) &=& y_0 + \int_0^s \sin \Big(\frac{1}{2}\kappa'\tau^2+\kappa\tau+\vartheta_0\Big) \dtau.
\end{EQ}
%%%
Notice that $\frac{1}{2}\kappa's^2+\kappa s+\vartheta_0$ and 
$\kappa's+\kappa$ are, respectively, the angle and the curvature
at the  abscissa $s$.
\end{definition}
%%%
The computation of the integrals~\eqref{clot} is here recast as a combination
(discussed in Section \ref{sec:6}) of Fresnel sine $\Sf(t)$ and cosine $\Cf(t)$
functions. Among the various possible definitions, we choose the following one,  \citeNP{abramowitz:1964}.
%%%
\begin{definition}[Fresnel integral functions]
%%%
\begin{EQ}\label{fresnel}
  \Cf(t)=\int_0^t\cos \left(\frac{\pi}{2}\tau^2\right)\dtau,\qquad
  \Sf(t)=\int_0^t\sin \left(\frac{\pi}{2}\tau^2\right)\dtau.\qquad
\end{EQ} 
%%%
\end{definition}

\begin{remark}
The literature reports different definitions, such as:
%%%
\begin{EQ}
  \widetilde\Cf(t) = \int_0^t\cos(\tau^2)\dtau,
  \qquad
  \widetilde\Sf(t) = \int_0^t\sin(\tau^2)\dtau,
\end{EQ}
%%%
or
%%%
\begin{EQ}\label{eq:SC:alternative}
  \widehat\Cf(\theta) = \dfrac{1}{\sqrt{2\pi}}\int_0^\theta\dfrac{\cos u}{\sqrt{u}}\du,
  \qquad
  \widehat\Sf(\theta) = \dfrac{1}{\sqrt{2\pi}}\int_0^\theta\dfrac{\sin u}{\sqrt{u}}\du.
\end{EQ}
The identities \eqref{eq:equiv} allow to switch among these definitions:
%%%
\begin{EQ}[rclcl]\label{eq:equiv}
  \Cf(t)
  &=&
  \int_0^{\frac{\sqrt{2}}{\sqrt{\pi}}t}\cos\left(\tau^2\right)\dtau
  &=&
  \dfrac{\SIGN(t)}{\sqrt{2\pi}}\int_0^{\frac{\pi}{2}t^2}
  \dfrac{\cos u}{\sqrt{u}}\du
  \\
  \Sf(t)
  &=&
  \int_0^{\frac{\sqrt{2}}{\sqrt{\pi}}t}\sin\left(\tau^2\right)\dtau
  &=&
  \dfrac{\SIGN(t)}{\sqrt{2\pi}}\int_0^{\frac{\pi}{2}t^2}
  \dfrac{\sin u}{\sqrt{u}}\du.
\end{EQ}
%%%
\end{remark}

Thus, the problem considered in this paper is stated next.
%%%
\begin{problem}[Clothoid Hermite interpolation]\label{prob:1}
  Given two points $(x_0,y_0)$ and $(x_1,y_1)$
  and two angles $\vartheta_0$ and $\vartheta_1$, 
  find a clothoid segment of the form~\eqref{clot} which
  satisfies:
  %%%
  \begin{EQ}[rclrclrcl]\label{eq:prob:1}
    x(0) &=& x_0, \qquad &
    y(0) &=& y_0, \qquad &
    (x'(0)\,,\,y'(0)) &=& (\cos\vartheta_0,\,\sin\vartheta_0), \\[\jot]
    x(L) &=& x_1, \qquad &
    y(L) &=& y_1, \qquad &
    (x'(L),y'(L)) &=& (\cos\vartheta_1,\,\sin\vartheta_1),
  \end{EQ}
  %%%
  where $L>0$ is the length of the curve segment.
\end{problem}
%%%
The general scheme is showed in Figure~\ref{fig:1and2} - left.
%%%
\begin{remark}
  Notice that Problem \ref{prob:1} admits an \emph{infinite} number of solutions.
  In fact, given $\vartheta(s)$, the angle of a clothoid which solves 
  Problem~\ref{prob:1}, satisfies $\vartheta(0)=\vartheta_0+2 k\pi$ and
  $\vartheta(L)=\vartheta_1+2\ell\pi$ with $k,\ell\in\mathbb{Z}$: different values of $k$ 
  correspond to different interpolant curves that loop around the initial and the final point.
  Figure~\ref{fig:1and2}- right shows possible solutions
  derived from the same Hermite data.
\end{remark}
%%%
\begin{figure}[!bt]
  \begin{center}
    \includegraphics[scale=0.75]{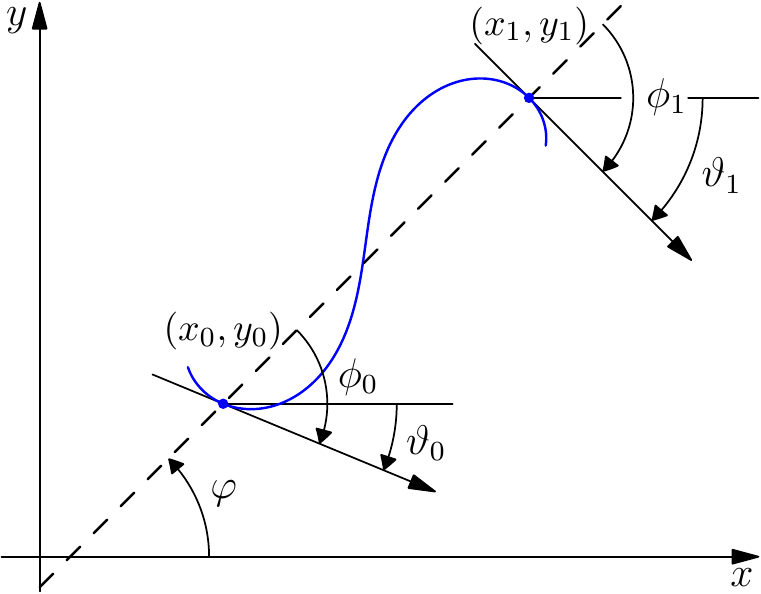}
    \qquad
    \includegraphics[scale=0.75]{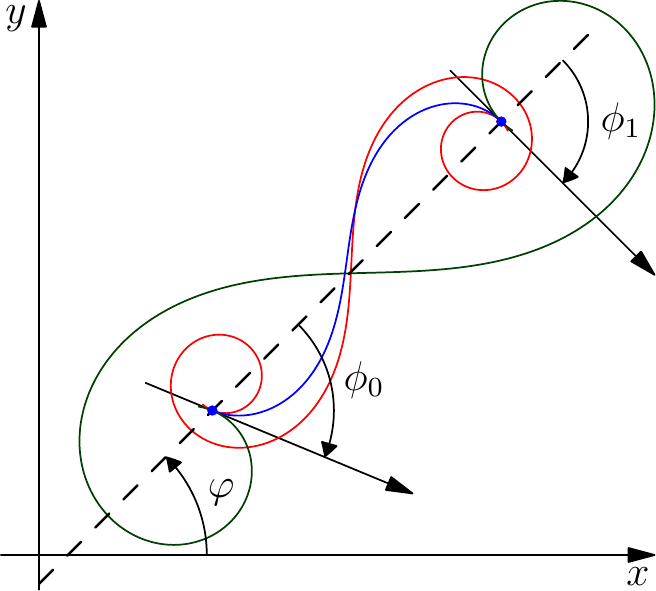}
  \end{center}
  \caption{Left: $G^1$ Hermite interpolation schema and  notation. Right: some possible solutions.}
  \label{fig:1and2}
\end{figure}
%%%
The solution of Problem~\ref{prob:1} is a zero of the following
nonlinear system involving the unknowns $L$, $\kappa$, $\kappa'$:

%%%
\begin{EQ}\label{eq:F:nonlin}
   \bm{F}(L,\kappa, \kappa')
   =
   \pmatrix{
     x_1-x_0 -\int_0^L \cos \left(\frac{1}{2}\kappa's^2+\kappa s+\vartheta_0\right) \ds\\
     y_1-y_0 -\int_0^L \sin \left(\frac{1}{2}\kappa's^2+\kappa s+\vartheta_0\right) \ds\\
     \vartheta_1-\left(\frac{1}{2}\kappa'L^2+\kappa L+\vartheta_0\right)
   }.\qquad
\end{EQ}
%%%
%Find the points $(L,\kappa, \kappa')$ that satisfy \eqref{eq:F:nonlin}
%is a difficult problem for the non linear system stated in this form.
In Section~\ref{sec:3} the nonlinear system \eqref{eq:F:nonlin}
is reduced to a single non linear equation that 
allows for the efficient solution Problem~\ref{prob:1}.

\section{Recasting the interpolation problem}
\label{sec:3}
%%%
The nonlinear system~\eqref{eq:F:nonlin} is recasted in an equivalent 
form by introducing the parametrization $s=\tau L$ so that
the involved integrals have extrema which do not dependent on $L$:
%%%
\begin{EQ}[rcl]\label{eq:F:nonlin:2}
   %\bm{F}(L,A,B)
   \bm{F}\left(L,\dfrac{B}{L},\dfrac{2\,A}{L^2}\right)
   &=&
   \pmatrix{
     \Delta x -L\int_0^1 \cos \left(A\tau^2+B\tau+\vartheta_0\right) \dtau \\
     \Delta y -L\int_0^1 \sin \left(A\tau^2+B\tau+\vartheta_0\right) \dtau \\
     \vartheta_1-\left(A+B+\vartheta_0\right)
   },\qquad
\end{EQ}
%%%
where $A=\frac{1}{2}\kappa' L^2$, $B=L\,\kappa$, $\Delta x = x_1-x_0$, $\Delta y = y_1-y_0$.
It is convenient to introduce the following functions whose properties 
are studied in Section~\ref{sec:6}:
%%%
\begin{EQ}[rcl]\label{eq:XY0}
  X(a,b,c)&=&\int_0^1 \cos\left(\frac{a}{2}\tau^2+b\tau+c\right)\dtau,\\
  Y(a,b,c)&=&\int_0^1 \,\sin\left(\frac{a}{2}\tau^2+b\tau+c\right)\dtau.
\end{EQ}
%%%
Combining~\eqref{eq:XY0} with~\eqref{eq:F:nonlin:2} becomes:
%%%
\begin{EQ}[rcl]\label{eq:F:nonlin:2:b}
   \bm{F}\left(L,\dfrac{B}{L},\dfrac{2\,A}{L^2}\right)
   &=&
   \pmatrix{
     \Delta x -L\cdot X(2A,B,\vartheta_0) \\
     \Delta y -L\cdot Y(2A,B,\vartheta_0) \\
     \vartheta_1-\left(A+B+\vartheta_0\right)
   }.\qquad
\end{EQ}
%%%
The third equation in \eqref{eq:F:nonlin:2} is linear so that 
solve it with respect to $B$,
%%%
\begin{EQ}\label{eq:F:nonlin:3:pre}
   B = \delta-A, \qquad \delta=\vartheta_1-\vartheta_0=\phi_1-\phi_0,
\end{EQ}
%%%
and the solution of the nonlinear system \eqref{eq:F:nonlin:2:b} is reduced to the
solution of the nonlinear system of two equations in two unknowns, namely $L$ and $A$:
%%%
\begin{EQ}\label{eq:F:nonlin:6}
   \bm{G}(L,A)
   =
   \pmatrix{
     \Delta x -L\cdot X(2A,\delta-A,\vartheta_0) \\
     \Delta y -L\cdot Y(2A,\delta-A,\vartheta_0)\\
   },\qquad
\end{EQ}
%%%
followed by the computation of $B$ by \eqref{eq:F:nonlin:3:pre}.
We can perform one further simplification using polar coordinates
to represent $(\Delta x,\Delta y)$, namely
%%%
\begin{EQ}\label{eq:xhhyhh:polar}
   \cases{
   \Delta x = r\cos\varphi,&\\
   \Delta y = r\sin\varphi.&\\
   }
\end{EQ}
%%%
From~\eqref{eq:xhhyhh:polar} and $L>0$ we define two nonlinear
functions $f(L,A)$ and $g(A)$, where $g(A)$ does not depend on $L$, 
as follows:
%%%
\begin{EQ}
   f(L,A)=\bm{G}(L,A)\cdot\pmatrix{\cos\varphi\\\sin\varphi},\qquad
   g(A)=\dfrac{1}{L}\bm{G}(L,A)\cdot\pmatrix{\sin\varphi\\-\cos\varphi}.
\end{EQ}
%%%
Taking advantage of the trigonometric identities 
%%%
\begin{EQ}[rcl]\label{eq:id:sincos}
  \sin(\alpha-\beta)&=&\sin\alpha\cos\beta-\cos\alpha\sin\beta, \\
  \cos(\alpha-\beta)&=&\cos\alpha\cos\beta+\sin\alpha\sin\beta,
\end{EQ}
%%%
the functions $g(A)$ and $f(L,A)$ are simplified in:
%%%
\begin{EQ}[rcl]\label{eq:g}
  g(A) &=& Y(2A,\delta-A,\phi_0),\\
  f(L,A)
  &=&
  \sqrt{\Delta x^2+\Delta y^2}-L\,X(2A,\delta-A,\phi_0),
\end{EQ}
%%%
where  $\phi_0=\vartheta_0-\varphi$.\\
Supposing to find $A$ such that  $g(A)=0$, then from $f(L,A)=0$ we compute $L$ and $B$ using equations \eqref{eq:g} and \eqref{eq:F:nonlin:3:pre} respectively. This yields
%%%
\begin{EQ}
  L = \dfrac{\sqrt{\Delta x^2+\Delta y^2}}{X(2A,\delta-A,\phi_0)},
  \qquad
  B = \delta-A.
\end{EQ}
Thus, the solutions of the nonlinear system~\eqref{eq:F:nonlin:2:b} 
are known if the solutions of the single nonlinear function $g(A)$ of equation~\eqref{eq:g}
are determined.
The solution of Problem~\ref{prob:1} is recapitulated in the following steps:
%%%
\begin{enumerate}
  \item[1.] Solve $g(A)=0$ where $g(A)\DEF Y(2A,\delta-A,\phi_0)$;
  \item[2.] Compute $L=\sqrt{\Delta x^2+\Delta y^2}/h(A)$ where $h(A)\DEF X(2A,\delta-A,\phi_0)$;
  \item[3.] Compute $\kappa=(\delta-A)/L$ and $\kappa'=2A/L^2$.
\end{enumerate}
%%%
This algorithm prompts the following issues.
%%%
\begin{itemize}
  \item[$\bullet$] How to compute the roots of $g(A)$ and to select the one
                    which appropriately solves Problem~\ref{prob:1}.
  \item[$\bullet$] Check that the length $L$ is well defined and positive.
\end{itemize}
%%%
These issues are discussed in Section~\ref{sec:existence}.
Unlike the work of \citeNP{Meek:2009}, this method
does not require to split the problem in mutually exclusive 
cases, i.e., straight lines and circles are treated naturally.

\section{Theoretical development}\label{sec:existence}
In this section the existence and selection of the appropriate solution
are discussed in detail.
The computation of $L$ requires only
to verify that for $A^\star$ such that $g(A^\star)=0$ then $h(A^\star)= X(2A^\star,\delta-A^\star,\phi_0)\neq 0$.
This does not ensure that the computed $L$ is positive; but positivity is 
obtained by an appropriate choice of $A^\star$.
\subsection{Symmetries of the roots of $g(A)$}
%
%%%%
The general analysis of the zeros of $g(A)$ requires the angles $\phi_0$ and $\phi_1$ to be
in the range $(-\pi,\pi)$. It is possible to restrict the domain of search stating
the following auxiliary problems:
%%%
\paragraph*{The reversed problem}
  The clothoid joining $(x_1,y_1)$ to $(x_0,y_0)$
  with angles $\vartheta_0^R=-\vartheta_1$ and $\vartheta_1^R=-\vartheta_0$ is a curve with 
  support a clothoid that solves Problem~\ref{prob:1}  but running in
  the opposite direction (with the same length $L$).
  Let $\delta^R = \vartheta_1^R-\vartheta_0^R=-\vartheta_0+\vartheta_1 = \delta$, 
  it follows that 
  $g^R(A)\DEF Y(2A,\delta-A,-\phi_1)$ is the function whose zeros give the solution
  of the reversed interpolation problem.

%%%
\paragraph*{The mirrored problem}
  The curve obtained connecting $(x_0,y_0)$ to $(x_1,y_1)$
  with angle $\vartheta_0^M=\varphi-\phi_0$ and $\vartheta_1^M=\varphi-\phi_1$ is a curve with 
  support a curve solving the same problem but mirrored along
  the line connecting the points $(x_0,y_0)$ and $(x_1,y_1)$ (with the same length $L$).
  Let $\delta^M = \vartheta_1^M-\vartheta_0^M=-\phi_1+\phi_0 = -\delta$, it follows that 
  $g^M(A)\DEF Y(2A,-\delta-A,-\phi_0)$ is the function whose zeros are the solution
  of the mirrored interpolation problem. \\ \\
%%%
Lemma \eqref{lem:reduce} shows that it is possible to reduce the search of the roots in the domain
$\abs{\phi_0}<\phi_1\leq\pi$. The special cases $\phi_0\pm\phi_1=0$ are 
considered separately.
%%%
\begin{lemma}\label{lem:reduce}
  Let $g(A)\DEF Y(2A,\delta-A,\phi_0)$ and $h(A)\DEF X(2A,\delta-A,\phi_0)$
  with
  %%%
  \begin{EQ}[rclcl]
     g^R(A) &\DEF& Y(2A,\delta-A,-\phi_1),\qquad
     g^M(A) &\DEF& Y(2A,-\delta-A,-\phi_0), \\
     h^R(A) &\DEF& X(2A,\delta-A,-\phi_1),\qquad
     h^M(A) &\DEF& X(2A,-\delta-A,-\phi_0),
  \end{EQ}
  %%%
  then 
  %%%
  \begin{EQ}
   g(A) = -g^R(-A),\qquad
   g(A) = -g^M(-A),\qquad
   h(A) = h^R(-A) = h^M(-A).
  \end{EQ}
  %%%
  Thus, $g(A)$ has the same roots of $g^R(A)$, $g^M(A)$
  with opposite sign.
\end{lemma}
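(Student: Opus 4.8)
The plan is to prove all three identities by unfolding the integral definitions of $X$ and $Y$ in~\eqref{eq:XY0} and tracking how the phase of the integrand transforms under the sign flip $A\mapsto -A$ together with the data changes that define the reversed and mirrored problems. Recall that in $g(A)=Y(2A,\delta-A,\phi_0)$ the phase is $\frac{2A}{2}\tau^2+(\delta-A)\tau+\phi_0=A\tau^2+(\delta-A)\tau+\phi_0$. The two families of identities split naturally: the mirrored relations fall out from pure parity of the integrand, whereas the reversed relations additionally require the orientation-reversing change of variable $\tau\mapsto 1-\tau$.

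First I would dispatch the mirrored relations $g(A)=-g^M(-A)$ and $h(A)=h^M(-A)$, since these need no substitution. Writing $g^M(-A)=Y(-2A,A-\delta,-\phi_0)=\int_0^1\sin\!\big(-A\tau^2+(A-\delta)\tau-\phi_0\big)\dtau$, one observes that its phase is exactly the negative of the phase $A\tau^2+(\delta-A)\tau+\phi_0$ of $g(A)$. Oddness of $\sin$ then gives $g^M(-A)=-g(A)$ at once, and the identical factorization with $\cos$ even yields $h^M(-A)=h(A)$.

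Next I would treat the reversed relations $g(A)=-g^R(-A)$ and $h(A)=h^R(-A)$. Here $g^R(-A)=Y(-2A,\delta+A,-\phi_1)=\int_0^1\sin\!\big(-A\tau^2+(\delta+A)\tau-\phi_1\big)\dtau$, whose linear term now carries the wrong sign to factor directly. The key step is the substitution $\tau=1-u$, which reverses the orientation of the unit segment and reflects the reversed traversal of the curve. Expanding $-A(1-u)^2+(\delta+A)(1-u)-\phi_1$ and collecting powers of $u$ produces $-Au^2+(A-\delta)u+(\delta-\phi_1)$; invoking the identity $\delta=\phi_1-\phi_0$ from~\eqref{eq:F:nonlin:3:pre} collapses the constant via $\delta-\phi_1=-\phi_0$, so the phase becomes $-\big(Au^2+(\delta-A)u+\phi_0\big)$, precisely the negative of the phase of $g(A)$. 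Oddness of $\sin$ then gives $g^R(-A)=-g(A)$, and evenness of $\cos$ gives $h^R(-A)=h(A)$ by the same computation.

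The argument is entirely elementary, so the only real obstacle is bookkeeping: one must carry the $A\mapsto -A$ sign flip, the orientation-reversing substitution, and the parity of the trigonometric function simultaneously, and check that the constant phase term cancels exactly through $\delta=\phi_1-\phi_0$. The concluding assertion that $g$ shares the roots of $g^R$ and $g^M$ up to the reflection $A\mapsto -A$ is then immediate from $g(A)=-g^R(-A)=-g^M(-A)$.
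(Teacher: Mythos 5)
Your proposal is correct and follows essentially the same route as the paper's proof: oddness of $\sin$ (and evenness of $\cos$) handles the mirrored identities directly, while the reversed identities additionally use the substitution $\tau\mapsto 1-\tau$ together with $\delta=\phi_1-\phi_0$ to collapse the constant term. The only difference is cosmetic — you start from $g^R(-A)$ and $g^M(-A)$ and reduce to $-g(A)$, whereas the paper manipulates $g(A)$ forward — and your computations check out.
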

\begin{proof}
  %%%
  \begin{EQ}[rcl]
     g(A) = Y(2A,\delta-A,\phi_0) 
     &=&
     \int_0^1 \sin( A\tau (\tau-1) + \delta \tau +\phi_0) \dtau, \\
     &=&
     -\int_0^1 \sin( -A\tau (\tau-1) - \delta \tau - \phi_0) \dtau, \\
     &=&
     -\int_0^1\sin( A(1-z)z - \delta (1-z) - \phi_0) \dz, \quad [z=1-\tau]\\
     &=&
     -\int_0^1 \sin( -Az(z-1) + \delta z - \phi_1) \dz,\\
     &=& -Y(-2A,\delta+A,-\phi_1), 
  \end{EQ}
  %%%
  we have $g(A) = -g^R(-A)$.\\
  Using again $g(A)$ with the following manipulation:
  %%%
  \begin{EQ}[rcl]
     g(-A) 
     =
     Y(-2A,\delta+A,\phi_0)
     &=&
     \int_0^1 \sin( -A\tau^2 + (\delta+A) \tau +\phi_0) \dtau, \\
     &=&
     -\int_0^1 \sin( A\tau^2 + (-\delta-A) \tau - \phi_0) \dtau, \\
     &=& -Y(2A,-\delta-A,-\phi_0)= -g^M(A),
  \end{EQ}
  %%%
  we have $g(A) = -g^M(-A)$.
  The thesis for $h(A)$ is obtained in the same way.
\end{proof}
%%%
\begin{figure}[!cb]
  \begin{center}
    \includegraphics[scale=0.65]{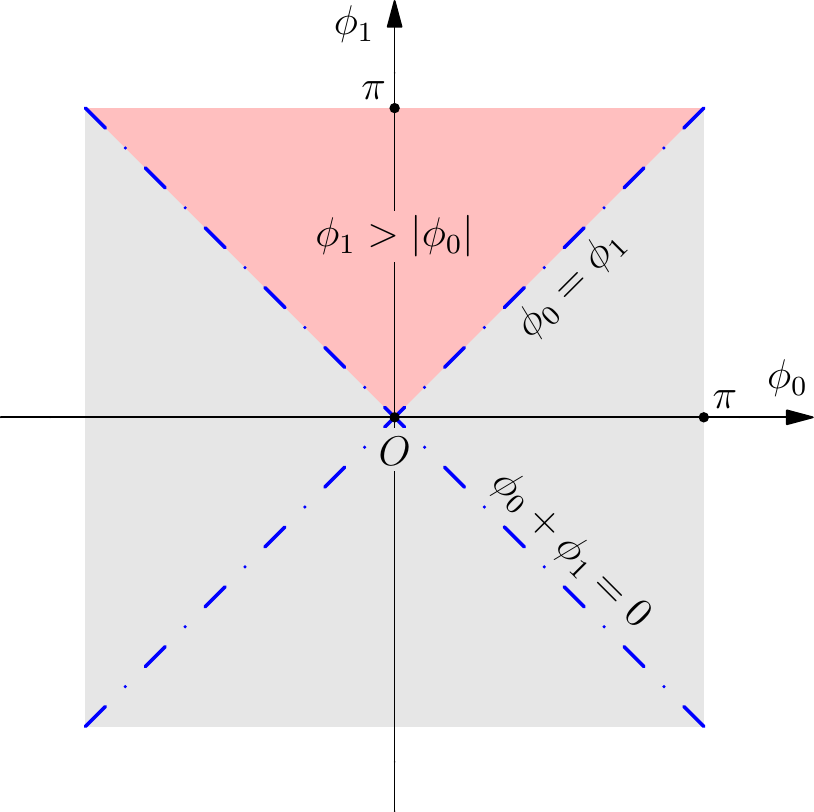}\qquad
    \includegraphics[scale=0.65]{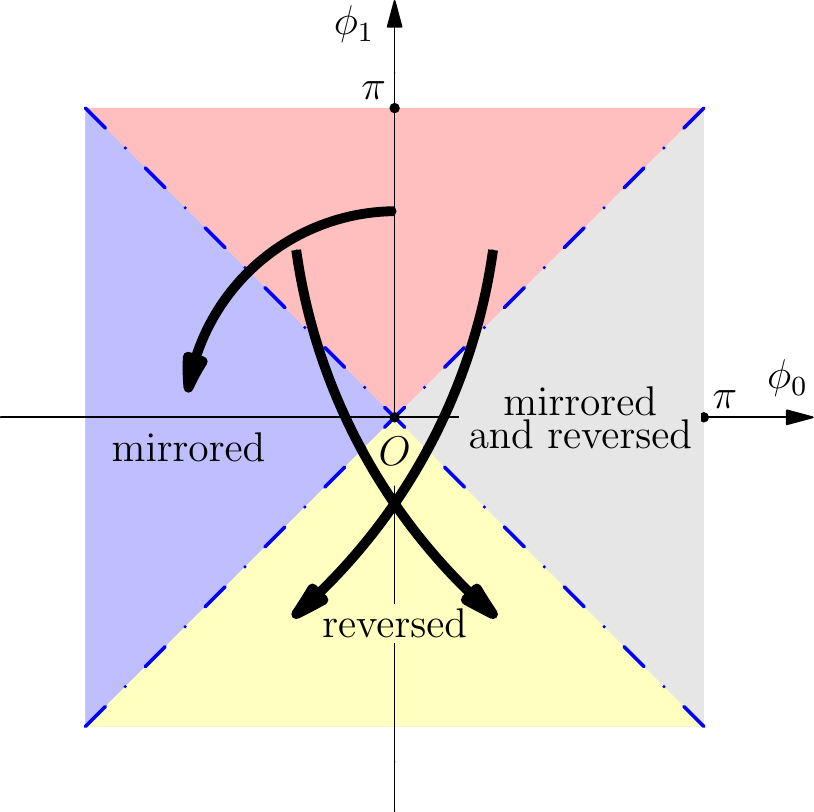}
  \end{center}
  \caption{Left: the domain $\phi_1>\abs{\phi_0}$ with the special cases $\phi_1=\phi_0$
  and $\phi_1+\phi_0=0$. Right: the domain mirrored and reversed.}
  \label{fig:sym}
\end{figure}
%%%
Figure~\ref{fig:sym} shows the domain $\abs{\phi_0}<\phi_1\leq\pi$ with
the mirrored and reversed problem.
Reflecting and mirroring allows to assume the constraints for the angles described in Assumption \eqref{ass:ordering}.
%%%%
\begin{assumption}[Angle domains]\label{ass:ordering}
  The angles $\phi_0$ and $\phi_1$ satisfy the restriction: $\abs{\phi_0}\leq\phi_1\leq\pi$
  with ambiguous cases $\abs{\phi_0}=\phi_1=\pi$ excluded (see Figure~\ref{fig:sym}).
\end{assumption}
%%%
This ordering ensures that the curvature of the fitting curve is increasing, i.e. $\kappa'>0$.
Notice that if $A$ is the solution of nonlinear system~\eqref{eq:g} then 
$\kappa'=2A/L^2$, i.e. the sign of $A$ is the sign of $\kappa'$ and thus $A$ must be positive.
Finally, ${\delta=\phi_1-\phi_0>0}$.
This assumption is not a limitation because any interpolation problem can be reformulated
as a problem satisfying Assumption~\ref{ass:ordering}
while the analysis for the special case $\phi_0+\phi_1=0$ and $\phi_0-\phi_1=0$ are performed apart.

%%%
\begin{lemma}\label{lem:4.9}
  The (continuous) functions $g(A)\DEF Y(2A,\delta-A,\phi_0)$ 
  and $h(A)\DEF X(2A,\delta-A,\phi_0)$ 
  for $A>0$, when $\phi_0$ and $\phi_1$ satisfy assumption~\ref{ass:ordering},
  can be written as
  %%%
  \begin{EQ}\label{eq:gA:cases}
    g(A)
    =\dfrac{\sqrt{2\pi}}{\sqrt{A}}
    \cases{ p\left(\frac{(\delta-A)^2}{4A}\right) & $0<A\leq\delta$, \\
             q\left(\frac{(\delta-A)^2}{4A}\right) & $A\geq\delta$;}
    \;
    h(A)
    =\dfrac{\sqrt{2\pi}}{\sqrt{A}}
     \cases{ \overline p\left(\frac{(\delta-A)^2}{4A}\right) & $0<A\leq\delta$, \\
             \overline q\left(\frac{(\delta-A)^2}{4A}\right) & $A\geq\delta$,}
  \end{EQ}
  %%%
  where
  %%%
  \begin{EQ}[rclcl]\label{eq:pq}
    p(\theta)&=&\int_{\theta}^{\theta+\delta}\dfrac{\sin(u+\phi_0-\theta)}{\sqrt{u}}\du,
    \quad
    q(\theta)
    &=&p(\theta)+2\int_{0}^{\theta}\dfrac{\sin(u+\phi_0-\theta)}{\sqrt{u}}\du,
    \\
    \overline p(\theta)&=&\int_{\theta}^{\theta+\delta}\dfrac{\cos(u+\phi_0-\theta)}{\sqrt{u}}\du,
    \quad
    \overline q(\theta)
    &=&\overline p(\theta)+2\int_{0}^{\theta}\dfrac{\cos(u+\phi_0-\theta)}{\sqrt{u}}\du.\quad
  \end{EQ}
  %%%
\end{lemma}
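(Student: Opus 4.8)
The plan is to derive all four representations in \eqref{eq:gA:cases} from one change of variables that completes the square in the phase, carried out in parallel for the sine integrand (giving $g$, hence $p,q$) and the cosine integrand (giving $h$, hence $\overline p,\overline q$ from \eqref{eq:pq}). First I would write the common phase as $A\tau^2+(\delta-A)\tau+\phi_0 = A(\tau+c)^2+\phi_0-\theta$, where $c\DEF\frac{\delta-A}{2A}$ and $\theta\DEF\frac{(\delta-A)^2}{4A}$ is exactly the argument appearing in \eqref{eq:gA:cases}; a one-line check also gives $\theta+\delta=\frac{(\delta+A)^2}{4A}$. Then I would substitute $u=A(\tau+c)^2$, so the phase collapses to $u+\phi_0-\theta$ and, away from $\tau=-c$, one has $\sqrt u=\sqrt A\,\abs{\tau+c}$ with $\dtau=\frac{\du}{2\sqrt A\,\sqrt u}$. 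This is what manufactures the common $1/\sqrt A$ prefactor together with the $1/\sqrt u$ weight of $p,q,\overline p,\overline q$.

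The two branches of \eqref{eq:gA:cases} correspond to whether this substitution is injective on $[0,1]$, and this is the crux. Assumption~\ref{ass:ordering} gives $A>0$ and $\delta>0$, so $\SIGN(c)=\SIGN(\delta-A)$. For $0<A\le\delta$ one has $c\ge0$, hence $\tau+c>0$ on $(0,1]$ and $u$ increases monotonically from $\theta$ at $\tau=0$ to $\theta+\delta$ at $\tau=1$; the change of variables is then a bijection and lands directly on $\int_\theta^{\theta+\delta}$, i.e.\ the $p$ (resp.\ $\overline p$) form. The harder case $A\ge\delta$, which I expect to be the main obstacle, has $c\le0$, so $\tau+c$ vanishes at the interior point $\tau=-c=\frac{A-\delta}{2A}\in[0,\frac{1}{2})$ and $\tau\mapsto u$ is two-to-one with a branch switch there. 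I would split the integral at $\tau=-c$, using $\sqrt u=-\sqrt A(\tau+c)$ on $[0,-c]$ (where $u$ runs from $\theta$ down to $0$) and $\sqrt u=+\sqrt A(\tau+c)$ on $[-c,1]$ (where $u$ runs from $0$ up to $\theta+\delta$), keeping track of the induced sign of $\dtau$ on each piece. The two pieces contribute $\int_0^{\theta}$ and $\int_0^{\theta+\delta}$; writing $\int_0^{\theta+\delta}=\int_0^{\theta}+\int_{\theta}^{\theta+\delta}$ and collecting turns them into $p(\theta)+2\int_0^{\theta}(\cdots)=q(\theta)$ (resp.\ $\overline q$), which is exactly the second branch.

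Finally I would record continuity. On $A>0$ both $g$ and $h$ are integrals of smooth, bounded integrands and are therefore continuous, so the content of the lemma is only that these integrals equal the stated closed forms; the piecewise formula is itself consistent at the junction $A=\delta$, where $\theta=0$ makes the extra term $2\int_0^{\theta}(\cdots)$ vanish, giving $q(0)=p(0)$ and $\overline q(0)=\overline p(0)$. Apart from the branch bookkeeping in the case $A\ge\delta$, the entire proof is the single substitution above applied once with $\sin$ and once with $\cos$.
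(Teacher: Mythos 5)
Your argument is correct and reaches the same piecewise formulas, but by a more direct route than the paper. The paper first expresses $\sqrt{A}\,g(A)$ and $\sqrt{A}\,h(A)$ through differences of the standard Fresnel integrals $\Cf(\omega_\pm),\Sf(\omega_\pm)$ with $\omega_\pm=(\delta\pm A)/\sqrt{2\pi A}$, then converts to the representation $\widehat\Cf,\widehat\Sf$ of \eqref{eq:SC:alternative} via \eqref{eq:equiv}, and recombines with the angle-sum identities \eqref{eq:id:sincos}; all of the branch bookkeeping you do by splitting $[0,1]$ at $\tau=-c$ is concentrated there in the single factor $\sigma_-=\SIGN(\delta-A)=\SIGN(\omega_-)$ inherited from \eqref{eq:equiv}, which flips the sign of the $\int_0^{\theta}$ term and turns $p$ into $q=p+2\int_0^\theta(\cdots)$ exactly as in your two-to-one case. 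Your single substitution $u=A(\tau+c)^2$ is self-contained, makes the origin of the extra $2\int_0^{\theta}$ term transparent, and correctly identifies the only delicate point (the sign of $\tau+c$); the paper's route has the advantage of reusing machinery ($\Cf,\Sf,\widehat\Cf,\widehat\Sf$) that it needs elsewhere anyway. One thing to be aware of: carried to completion, your substitution yields the prefactor $\frac{1}{2\sqrt{A}}$, not the $\frac{\sqrt{2\pi}}{\sqrt{A}}$ printed in \eqref{eq:gA:cases} --- a quick check at $A=\delta$, $\phi_0=0$ gives $g(\delta)=\int_0^1\sin(\delta\tau^2)\dtau=\frac{1}{2\sqrt{\delta}}\int_0^{\delta}u^{-1/2}\sin u\du=\frac{1}{2\sqrt{\delta}}\,p(0)$ --- and indeed the constants displayed in the paper's own proof are not mutually consistent. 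Since the prefactor is a positive constant, nothing downstream (signs, zero sets, root intervals) is affected, but your derivation will not literally reproduce the stated constant, and you should say so rather than silently writing ``$1/\sqrt{A}$ prefactor.''
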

\begin{proof}
With standard trigonometric passages and $A>0$ we deduce the following expression for $g(A)$ and $h(A)$:
%%%
\begin{EQ}[rcl]
  \sqrt{A}\,
  g(A)
  &=&
  \sqrt{2\pi}
  \Big[
    \left(\Cf(\omega_+)-\Cf(\omega_-)\right)\sin\eta +
    \left(\Sf(\omega_+)- \Sf(\omega_-)\right)\cos\eta
  \Big],
  \\
  \sqrt{A}\,
  h(A)
  &=&
  \sqrt{2\pi}
  \Big[
    \left(\Cf(\omega_+)-\Cf(\omega_-)\right)\cos\eta -
    \left(\Sf(\omega_+)- \Sf(\omega_-)\right)\sin\eta
  \Big],
\end{EQ}
%%%
where
%%%
\begin{EQ}[c]\label{eq:th:2}
%  \sigma=\SIGN(A),\quad
  \omega_- = \frac{\delta-A}{\sqrt{2\pi A}}, \qquad
  \omega_+ = \frac{\delta+A}{\sqrt{2\pi A}}, \qquad
  \eta = \phi_0-\dfrac{(\delta-A)^2}{4A}.
\end{EQ}
%%%
Combining equivalence~\eqref{eq:equiv}
and the parity properties of $\sin x$ and $\cos x$, $g(A)$ and $h(A)$ take the form:
%%%
\begin{EQ}[rcl]\label{eq:g:and:h}
  \sqrt{A}\,
  g(A)
  &=&
  \Delta\widehat\Cf\sin\left(\phi_0-\theta\right)+\Delta\widehat\Sf\cos\left(\phi_0-\theta\right),
  \\
  \sqrt{A}\,
  h(A)
  &=&
  \Delta\widehat\Cf\cos\left(\phi_0-\theta\right)-\Delta\widehat\Sf\sin\left(\phi_0-\theta\right),
\end{EQ}
%%%
where
%%%
\begin{EQ}
  \Delta\widehat\Cf=\widehat\Cf\left(\theta+\delta\right)-\sigma_-\widehat\Cf\left(\theta\right),
  \qquad
  \Delta\widehat\Sf=\widehat\Sf\left(\theta+\delta\right)-\sigma_-\widehat\Sf\left(\theta\right),
\end{EQ}
%%%
and $\widehat\Cf$, $\widehat\Sf$ are defined in \eqref{eq:SC:alternative}; moreover
%%%
\begin{EQ}\label{eq:theta:A}
         \theta = \frac{(\delta-A)^2}{4A},
   \qquad \theta+\delta = \frac{(\delta+A)^2}{4A},
   %\quad \sigma_+    = \SIGN(\delta+A),
   \qquad \sigma_-    = \SIGN(\delta-A).\qquad
\end{EQ}
%%%
By using identities \eqref{eq:id:sincos} equation~\eqref{eq:g:and:h} becomes:
%%%
\begin{EQ}[rcl]
  \hat g(\theta) =\dfrac{\sqrt{A}}{\sqrt{2\pi}} g(A)
  &=&
  \int_0^{\theta+\delta}
  \dfrac{\sin(u+\phi_0-\theta)}{\sqrt{u}}\du
  -
  \sigma_-\int_0^{\theta}
  \dfrac{\sin(u+\phi_0-\theta)}{\sqrt{u}}\du,
  \\
  \hat h(\theta) =\dfrac{\sqrt{A}}{\sqrt{2\pi}} h(A)
  &=&
  \int_0^{\theta+\delta}
  \dfrac{\cos(u+\phi_0-\theta)}{\sqrt{u}}\du
  -
  \sigma_-\int_0^{\theta}
  \dfrac{\cos(u+\phi_0-\theta)}{\sqrt{u}}\du.
\end{EQ}
%%%
It is recalled that $A$ must be positive, so that when $A$ to $0 < A <\delta$ then $\sigma_-=1$,
otherwise, when $A>\delta$ then $\sigma_-=-1$. In case $A=\delta$ then $\theta=0$ and the second
integral is $0$ and thus $g(\delta)=p(0)=q(0)$ and $h(\delta)=\overline p(0)=\overline q(0)$.
%%%
\end{proof}

\subsection{Localization of the roots of $g(A)$}

The problem $g(A)=0$ has in general infinite solutions.
The next Theorems show the existence of a 
\emph{unique} solution in a prescribed range, 
they are in part new and in part taken from \cite{Walton:2009} 
here reported without proofs and notation slightly changed to better match the notation.
By appropriate transformations we use these
Theorems to select the suitable solution
and find the interval where the solution is unique.
The Theorems characterize the zeros of the functions~\eqref{eq:pq}
finding intervals where the solution exists and is unique.

\begin{theorem}[Meek-Walton th.2]\label{th:MW:2}
Let $0<-\phi_0<\phi_1<\pi$. If $p(0)>0$ then $p(\theta)=0$ has no root for $\theta\geq 0$.
If $p(0)\leq 0$ then $p(\theta)=0$ has exactly one root for $\theta\geq 0$.
Moreover, the root occurs in the interval $[0,\theta^\star]$ where
%%%
\begin{EQ}\label{eq:cond:theta}
  \theta^\star=\dfrac{\lambda^{2}}{1-\lambda^{2}}(\phi_{1}-\phi_{0})>0
  \qquad 
  0<\lambda=\dfrac{1-\cos\phi_{0}}{1-\cos\phi_{1}}<1.
\end{EQ}
%%%
\end{theorem}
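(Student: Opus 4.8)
The plan is to bring $p$ to a form in which $\theta$ enters only through the weight and then to control the sign of a single integral at the zeros of $p$. First I would substitute $t=u-\theta$ in the definition of $p$ in \eqref{eq:pq} to obtain
\[
  p(\theta)=\int_0^\delta \frac{\sin(t+\phi_0)}{\sqrt{t+\theta}}\dt,
\]
with $\delta=\phi_1-\phi_0>0$. Under the hypothesis $0<-\phi_0<\phi_1<\pi$ one has $0<-\phi_0<\delta$, so the numerator $\sin(t+\phi_0)$ vanishes exactly once in $(0,\delta)$, namely at $t=-\phi_0$, being strictly negative on $(0,-\phi_0)$ and strictly positive on $(-\phi_0,\delta)$. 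Differentiating under the integral sign (legitimate for $\theta>0$) gives
\[
  p'(\theta)=-\frac{1}{2}\int_0^\delta \frac{\sin(t+\phi_0)}{(t+\theta)^{3/2}}\dt.
\]

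The crux is to show that at every zero $\theta_0>0$ of $p$ one has $p'(\theta_0)>0$; this forces $p$ to cross zero only upward and hence at most once. To prove it I would set $c=(\theta_0-\phi_0)^{-1}$, the value of $(t+\theta_0)^{-1}$ at the sign change $t=-\phi_0$, and exploit $p(\theta_0)=0$ to subtract $c$ times the vanishing integral:
\[
  \int_0^\delta \frac{\sin(t+\phi_0)}{(t+\theta_0)^{3/2}}\dt
  =\int_0^\delta \frac{\sin(t+\phi_0)}{\sqrt{t+\theta_0}}\left(\frac{1}{t+\theta_0}-c\right)\dt.
\]
On $(0,-\phi_0)$ the numerator is negative while $(t+\theta_0)^{-1}-c>0$; on $(-\phi_0,\delta)$ the numerator is positive while $(t+\theta_0)^{-1}-c<0$. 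Thus the integrand is $\le 0$ throughout, the right-hand side is strictly negative, and therefore $p'(\theta_0)>0$.

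With this key fact in hand the dichotomy follows from the endpoint behaviour. Letting $\theta\to\infty$ shows $p(\theta)\sim(\cos\phi_0-\cos\phi_1)/\sqrt{\theta}>0$, since $|\phi_0|<\phi_1<\pi$, so $p$ is eventually positive. If $p(0)>0$, a first zero $\theta_0>0$ would be a downward crossing with $p'(\theta_0)\le 0$, contradicting the lemma, hence no zero exists; if $p(0)\le 0$ the lemma gives at most one zero, and I still must produce it and bound it. For the bound I would estimate the negative lobe $(0,-\phi_0)$ and the positive lobe $(-\phi_0,\delta)$ separately, replacing the weight by its extreme value on each piece, to get
\[
  p(\theta)\ge \frac{1-\cos\phi_1}{\sqrt{\delta+\theta}}-\frac{1-\cos\phi_0}{\sqrt{\theta}},
\]
whose right-hand side is $\ge 0$ exactly when $\theta\ge \frac{\lambda^2}{1-\lambda^2}\delta=\theta^\star$, with $\lambda=(1-\cos\phi_0)/(1-\cos\phi_1)\in(0,1)$. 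Hence $p(\theta^\star)\ge 0$, and together with $p(0)\le 0$ the intermediate value theorem plus the uniqueness above places the single root in $[0,\theta^\star]$.

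The main obstacle, and the step I expect to demand the most care, is the key lemma: recognising that multiplying the balanced weight $(t+\theta_0)^{-1/2}$ by the decreasing factor $(t+\theta_0)^{-1}$ tilts the mass onto the negative lobe is the whole mechanism, and the subtraction of $c$ times the null integral is what makes this precise. Everything else — the substitution, the large-$\theta$ asymptotics, and the one-sided weight estimates producing $\theta^\star$ — is routine once this sign argument is isolated. A minor technical point is the non-integrable singularity of $p'$ at $\theta=0$, which I would circumvent by confining the monotonicity-at-zeros argument to $\theta>0$ and handling the boundary value $p(0)$ directly.
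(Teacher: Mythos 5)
The paper offers no proof of this statement to compare against: it is one of the results explicitly ``taken from Meek and Walton, here reported without proofs,'' so your blind attempt has to be judged on its own. As far as I can check, it is correct and self-contained. The substitution $t=u-\theta$ giving $p(\theta)=\int_0^\delta \sin(t+\phi_0)\,(t+\theta)^{-1/2}\,\mathrm{d}t$ is right; the key lemma --- that at any zero $\theta_0>0$ one has $p'(\theta_0)>0$, obtained by subtracting $c=(\theta_0-\phi_0)^{-1}$ times the vanishing integral so that the factor $(t+\theta_0)^{-1}-c$ has sign opposite to $\sin(t+\phi_0)$ on both sides of the sign change $t=-\phi_0$ --- is sound and forces every positive zero to be an upward crossing, hence at most one; the asymptotic $p(\theta)\sim(\cos\phi_0-\cos\phi_1)\theta^{-1/2}>0$ settles the dichotomy for $p(0)>0$; and the two one-sided weight estimates do give $p(\theta)\ge (1-\cos\phi_1)(\delta+\theta)^{-1/2}-(1-\cos\phi_0)\theta^{-1/2}$, whose nonnegativity threshold is exactly $\theta^\star=\lambda^2\delta/(1-\lambda^2)$ with $\lambda=(1-\cos\phi_0)/(1-\cos\phi_1)\in(0,1)$. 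This sign-change-of-the-numerator-against-a-monotone-weight mechanism is very much in the Meek--Walton style, so you have essentially reconstructed the omitted external proof rather than found a genuinely different route. The one point to make explicit in a write-up is the boundary case $p(0)=0$: the key lemma alone does not exclude $p$ dipping negative on $(0,\theta_1)$ and re-crossing upward at a second root $\theta_1>0$, so you should state that $p>0$ immediately to the right of $0$ (e.g.\ because $p'(\theta)\to+\infty$ as $\theta\to 0^+$, the singular negative lobe near $t=0$ dominating the derivative integral); your closing remark about handling the boundary value directly gestures at this but the sentence should actually appear.
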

%%%%
%
\begin{theorem}[Meek-Walton th.3]\label{th:MW:3}
Let 
$-\pi < -\phi_1 < \phi_0 < 0$ and $q(0)>0$ 
%$0<-\phi_0<\phi_1<\pi$ and $q(0)>0$ 
then $q(\theta)=0$ has 
exactly one root in the interval $[0,\pi/2+\phi_0]$.
If $q(0)<0$ then $q(\theta)=0$ has 
no roots in the interval $[0,\pi/2+\phi_0]$.
\end{theorem}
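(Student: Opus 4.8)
The plan is to rewrite $q$ as a single integral, read off its two endpoint values, and prove that $q$ can change sign at most once on $[0,\pi/2+\phi_0]$; the two alternatives of the statement then follow from the intermediate value theorem.

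First I would merge the two integrals in the definition~\eqref{eq:pq} of $q$. Writing $p(\theta)=\int_\theta^{\theta+\delta}(\cdots)$ gives
\[
  q(\theta)=\int_0^{\theta+\delta}\frac{\sin(u+\phi_0-\theta)}{\sqrt u}\du+\int_0^{\theta}\frac{\sin(u+\phi_0-\theta)}{\sqrt u}\du,
\]
so in particular $q(0)=p(0)$, which is exactly the quantity whose sign the hypothesis prescribes. I would record the companion $\overline q$ (replace $\sin$ by $\cos$ in the two integrands) and the phase--amplitude form $q=\mathrm{Im}\!\big(e^{\ii(\phi_0-\theta)}K(\theta)\big)$, $\overline q=\mathrm{Re}\!\big(e^{\ii(\phi_0-\theta)}K(\theta)\big)$, where $K(\theta)=\sqrt{2\pi}\big[(\widehat\Cf+\ii\widehat\Sf)(\theta+\delta)+(\widehat\Cf+\ii\widehat\Sf)(\theta)\big]$. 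At the right endpoint (nondegenerate precisely when $\phi_0>-\pi/2$; otherwise the interval is trivial) the phase $\phi_0-\theta$ equals $-\pi/2$, so $\sin(u+\phi_0-\theta)=-\cos u$ and
\[
  q(\pi/2+\phi_0)=-\sqrt{2\pi}\big[\widehat\Cf(\pi/2+\phi_0+\delta)+\widehat\Cf(\pi/2+\phi_0)\big]<0,
\]
since $\widehat\Cf$ is strictly positive on positive arguments (the substitution $u=\frac{\pi}{2}t^2$ turns it into $\Cf$, which never vanishes for a positive upper limit).

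The heart of the proof is the claim that at every zero $\theta^\star\in[0,\pi/2+\phi_0]$ one has $q'(\theta^\star)<0$ (``$q$ only ever crosses downward''). Differentiating the combined integral --- legitimate since the $1/\sqrt u$ singularity is integrable --- and using $\delta+\phi_0=\phi_1$ yields
\[
  q'(\theta)=\frac{\sin\phi_1}{\sqrt{\theta+\delta}}+\frac{\sin\phi_0}{\sqrt\theta}-\overline q(\theta).
\]
A quadrant argument gives $\overline q(\theta)>0$ throughout: on the interval $\phi_0-\theta\in(-\pi/2,0)$ while $K(\theta)$ lies in the open first quadrant, so $e^{\ii(\phi_0-\theta)}K$ has argument in $(-\pi/2,\pi/2)$. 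Hence the downcrossing claim reduces to the inequality $\sin\phi_1/\sqrt{\theta^\star+\delta}+\sin\phi_0/\sqrt{\theta^\star}<\overline q(\theta^\star)$, and at a root $\overline q(\theta^\star)=|K(\theta^\star)|>0$. When the left-hand side is $\le 0$ --- in particular for small $\theta^\star$, where the term $\sin\phi_0/\sqrt{\theta^\star}$ (with $\sin\phi_0<0$) dominates --- the inequality is immediate.

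I expect the genuine obstacle to be the remaining range, where the positive contribution $\sin\phi_1/\sqrt{\theta+\delta}$ is not absorbed by the negative one and one must bound $\overline q(\theta^\star)$ from below: here I would invoke the full angle restriction $0<-\phi_0<\phi_1<\pi$ together with the positivity and monotonicity of the Fresnel integrals $\widehat\Cf,\widehat\Sf$. Granting the downcrossing claim, the conclusion assembles cleanly. If $q(0)>0$, then $q(0)>0>q(\pi/2+\phi_0)$ produces at least one root, and a second is impossible, since between two roots $q$ would have to rise back to $0$, forcing $q'\ge0$ at the later one; hence there is exactly one root. If $q(0)<0$, any root would be a point where $q$ returns upward to $0$, again forcing $q'\ge0$ and contradicting the claim; hence there is no root in $[0,\pi/2+\phi_0]$.
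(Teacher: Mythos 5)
The paper offers no proof for you to match: Theorems \ref{th:MW:2}--\ref{th:MW:4} are imported from Walton and Meek and explicitly ``reported without proofs,'' so your attempt must stand on its own. Its scaffolding is correct and well chosen: the merged form $q(\theta)=\int_0^{\theta+\delta}(\cdots)+\int_0^{\theta}(\cdots)$, the identity $q(0)=p(0)$, the endpoint sign $q(\pi/2+\phi_0)<0$, the quadrant argument giving $\overline q>0$ on the whole interval, the derivative formula $q'(\theta)=\sin\phi_1/\sqrt{\theta+\delta}+\sin\phi_0/\sqrt{\theta}-\overline q(\theta)$, and the reduction of both alternatives of the statement to the downcrossing property ``$q'(\theta^\star)<0$ at every root'' are all sound.

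The genuine gap is that the downcrossing property is established only when $\sin\phi_1/\sqrt{\theta^\star+\delta}+\sin\phi_0/\sqrt{\theta^\star}\le 0$; in the complementary case you state an intention (``invoke the angle restriction and the positivity and monotonicity of the Fresnel integrals'') rather than an argument, and that case is exactly where the content of the theorem lives --- observing that $\overline q(\theta^\star)=|K(\theta^\star)|$ at a root supplies no usable lower bound by itself. The estimate can in fact be closed by integrating $\overline p$ by parts,
\[
  \overline p(\theta)=\frac{\sin\phi_1}{\sqrt{\theta+\delta}}-\frac{\sin\phi_0}{\sqrt{\theta}}
  +\frac{1}{2}\int_{\phi_0}^{\phi_1}\frac{\sin z}{(z-\phi_0+\theta)^{3/2}}\dz .
\]
Since $\overline q-\overline p$ has a nonnegative integrand for $\theta\in[0,\pi/2+\phi_0]$, one gets $q'(\theta)\le 2\sin\phi_0/\sqrt{\theta}-\frac{1}{2}\int_{\phi_0}^{0}\sin z\,(z-\phi_0+\theta)^{-3/2}\dz$ after discarding the (favourable) part of the integral over $(0,\phi_1)$; bounding the remaining piece by $|\phi_0|/\sqrt{\theta}$ via $|\sin z|\le|z|$ then yields $q'(\theta)\le(|\phi_0|-2|\sin\phi_0|)/\sqrt{\theta}<0$ for $0<|\phi_0|<\pi/2$, the only nondegenerate case by your own remark, so $q$ is in fact strictly decreasing on the whole interval and both alternatives follow. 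Until an estimate of this kind is carried out, your proposal is a correct skeleton rather than a proof.
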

%%%%
%
\begin{theorem}[Meek-walton th.4]\label{th:MW:4}
  Let $\phi_0\in[0,\pi)$ and $\phi_1\in(0,\pi]$, then $q(\theta)=0$  
  has exactly one root in $[0,\pi/2+\phi_0]$, moreover, 
  the root occurs in $[\phi_0,\pi/2+\phi_0]$. 
\end{theorem}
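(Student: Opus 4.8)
The plan is to work entirely from the Fresnel representation of $q$ supplied by Lemma~\ref{lem:4.9}. On the branch $A\ge\delta$ we have $\sigma_-=-1$, so $q(\theta)=\int_0^{\theta+\delta}\frac{\sin(u+\phi_0-\theta)}{\sqrt u}\du+\int_0^{\theta}\frac{\sin(u+\phi_0-\theta)}{\sqrt u}\du$, which after expanding $\sin(u+\phi_0-\theta)$ I would rewrite as $q(\theta)=\sqrt{2\pi}\big[\Delta_C\sin(\phi_0-\theta)+\Delta_S\cos(\phi_0-\theta)\big]$ with $\Delta_C=\widehat\Cf(\theta+\delta)+\widehat\Cf(\theta)$ and $\Delta_S=\widehat\Sf(\theta+\delta)+\widehat\Sf(\theta)$. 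The only external analytic input I would rely on is the classical positivity of the Fresnel integrals, namely $\widehat\Cf(x)>0$ and $\widehat\Sf(x)>0$ for every $x>0$.

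For existence I would check the two endpoints. At $\theta=0$ the second integral vanishes and $q(0)=\int_0^{\delta}\frac{\sin(u+\phi_0)}{\sqrt u}\du>0$, because the argument $u+\phi_0$ runs through $(\phi_0,\phi_1)\subseteq(0,\pi)$, where the sine is positive. At $\theta=\pi/2+\phi_0$ the phase $\phi_0-\theta$ equals $-\pi/2$, so $\sin(u-\pi/2)=-\cos u$, $\cos(\phi_0-\theta)=0$, and the expression telescopes to $q(\pi/2+\phi_0)=-\sqrt{2\pi}\big[\widehat\Cf(\pi/2+\phi_1)+\widehat\Cf(\pi/2+\phi_0)\big]<0$. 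The intermediate value theorem then produces a root. To confine it to $[\phi_0,\pi/2+\phi_0]$ I would show $q(\theta)>0$ on $[0,\phi_0]$: for such $\theta$ the argument $u+\phi_0-\theta$ stays in $[0,\pi]$ in both integrals (over $[\phi_0-\theta,\phi_1]$ in the first and over $[\phi_0-\theta,\phi_0]$ in the second), so both integrands are nonnegative and $q>0$. Hence no zero precedes $\phi_0$ and the crossing lies in $(\phi_0,\pi/2+\phi_0)$.

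The delicate part is uniqueness. Writing $q(\theta)=\sqrt{2\pi}\,R(\theta)\sin\!\big(\phi_0-\theta+\chi(\theta)\big)$ with $R=\sqrt{\Delta_C^2+\Delta_S^2}>0$ and $\chi=\arctan(\Delta_S/\Delta_C)\in(0,\pi/2)$, a zero in the interval is exactly a solution of $\theta-\chi(\theta)=\phi_0$, and on this branch $\phi_0-\theta+\chi(\theta)$ lies in $(-\pi/2,\pi/2)$, so it must vanish at a root. Differentiating gives $q'(\theta^\star)=\sqrt{2\pi}\,R(\theta^\star)\big(\chi'(\theta^\star)-1\big)$ at any such zero, so uniqueness reduces to the single inequality $\chi'(\theta)<1$: the cumulative Fresnel vector $\Delta_C+i\,\Delta_S$ must wind strictly slower than the linear phase $\phi_0-\theta$. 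Equivalently I would aim for the stronger $q'(\theta)<0$ throughout $[\phi_0,\pi/2+\phi_0]$, starting from the identity $q'(\theta)=-\overline q(\theta)+\frac{\sin\phi_1}{\sqrt{\theta+\delta}}+\frac{\sin\phi_0}{\sqrt\theta}$, where $\overline q$ is the cosine companion of~\eqref{eq:pq}; note that at a zero one gets $\overline q(\theta^\star)=\sqrt{2\pi}\,\Delta_S(\theta^\star)/\sin(\theta^\star-\phi_0)>0$, which incidentally also secures $h(A^\star)\neq0$ and $L>0$.

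The main obstacle is precisely this bound. Positivity of $\widehat\Cf,\widehat\Sf$ gives $\overline q>0$ on the whole interval and settles the endpoint $\theta=\phi_0$: an integration by parts shows $\int_0^X\frac{\cos u}{\sqrt u}\du>\frac{\sin X}{\sqrt X}$ for $X\in(0,\pi)$, whence $\overline q(\phi_0)$ exceeds the boundary term and $q'(\phi_0)<0$. The same integration by parts fails for $\theta>\phi_0$, however, because the boundary contribution $\sin(\phi_0-\theta)/\sqrt u$ becomes singular at $u=0$. I expect the genuine work to be a uniform estimate of $\Delta_C\Delta_S'-\Delta_S\Delta_C'$ against $\Delta_C^2+\Delta_S^2$, equivalently of $\mathrm{Im}(\bar Z Z')$ against $\abs{Z}^2$ for $Z(\theta)=\int_0^{\theta+\delta}\frac{e^{iu}}{\sqrt u}\du+\int_0^{\theta}\frac{e^{iu}}{\sqrt u}\du$, carried out using the monotonicity and sign pattern of the Fresnel integrands. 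This single step carries essentially all the analytic difficulty, the existence and the localization being comparatively routine.
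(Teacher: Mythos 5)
There is a genuine gap. First, for context: the paper itself does not prove this statement --- it is explicitly imported from Walton and Meek (2009) ``without proofs,'' so there is no internal argument to compare yours against; your attempt has to stand on its own. The existence and localization parts of your plan do stand up. The endpoint evaluations $q(0)=\int_0^{\delta}\sin(u+\phi_0)u^{-1/2}\,\mathrm{d}u>0$ and $q(\pi/2+\phi_0)=-\sqrt{2\pi}\bigl[\widehat\Cf(\pi/2+\phi_1)+\widehat\Cf(\pi/2+\phi_0)\bigr]<0$ are correct (granting the classical positivity of $\widehat\Cf$ and $\widehat\Sf$ on $(0,\infty)$, which is a legitimate external input), the sign argument showing $q>0$ on $[0,\phi_0]$ is correct, and the phase representation $q=\sqrt{2\pi}\,R\sin(\phi_0-\theta+\chi)$ with $\chi\in(0,\pi/2)$ correctly reduces uniqueness to the statement that every zero is a strict down-crossing, i.e.\ to $\chi'(\theta^\star)<1$.

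But that last inequality is the entire content of the uniqueness claim, and you do not prove it: you state it, reformulate it twice (as $q'<0$ on $[\phi_0,\pi/2+\phi_0]$, and as a bound on $\mathrm{Im}(\bar Z Z')$ against $\abs{Z}^2$), verify it only at the single point $\theta=\phi_0$, and then explicitly concede that ``this single step carries essentially all the analytic difficulty.'' A proof plan that isolates the hard step and leaves it open is not a proof. Worse, the stronger route you propose ($q'<0$ throughout, via $q'=-\overline q+\sin\phi_1/\sqrt{\theta+\delta}+\sin\phi_0/\sqrt{\theta}$) is not obviously available: positivity of $\overline q$ alone does not dominate the two boundary terms away from $\theta=\phi_0$, and you give no mechanism for controlling them uniformly. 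Until the winding bound $\chi'<1$ (or some substitute, e.g.\ the comparison-of-arches argument Walton and Meek actually use on the oscillatory tails of $\int_0^x u^{-1/2}e^{iu}\,\mathrm{d}u$) is established on all of $[\phi_0,\pi/2+\phi_0]$, uniqueness --- and hence the theorem --- remains unproven.
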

%%%
The following additional Lemmata are necessary to complete
the list of properties of $p(\theta)$ and $q(\theta)$:
%%%
\begin{samepage}
\begin{lemma}\label{lem:4.8}
  Let $p(\theta)$ and $q(\theta)$ as defined in equation~\eqref{eq:pq}, then
  %%%
  \begin{itemize}
    \item[$(a)$] if $0\leq\phi_0\leq\phi_1\leq\pi$ then
    \begin{itemize}
      \item[$\circ$] if $\phi_1>\phi_0$ then $p(\theta)>0$ for all $\theta\geq0$ 
                       otherwise $p(\theta)=0$ for all $\theta\geq0$;
      \item[$\circ$] $q(\theta)=0$ for $\theta\in[\phi_0,\pi/2+\phi_0]$
                       and the root is unique in the interval $[0,\pi/2+\phi_0]$;
    \end{itemize}
    \item[$(b)$] if $-\pi\leq-\phi_1<\phi_0<0$
    \begin{itemize}
      \item[$\circ$] if $p(0)=q(0)\leq 0$ then
      \begin{itemize}
        \item[$\bullet$] $p(\theta)=0$ has a unique root $\theta$ that satisfies
        $0\leq\theta\leq\theta_0$ with $\theta_0$ defined in~\eqref{eq:cond:theta}.
        \item[$\bullet$] $q(\theta)=0$ has no roots in the interval $[0,\pi/2+\phi_0]$;
      \end{itemize}
      \item[$\circ$] if $p(0)=q(0)>0$ then
      \begin{itemize}
        \item[$\bullet$] $p(\theta)>0$ for all $\theta\geq 0$;
        \item[$\bullet$] $q(\theta)=0$ has a unique root in the interval $[0,\pi/2+\phi_0]$
      \end{itemize}
    \end{itemize}
    \item[$(c)$] if $\phi_0\leq-\pi/2$ then $p(0)=q(0)<0$.
  \end{itemize}
  %%%%
\end{lemma}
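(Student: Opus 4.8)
The proof plan is to establish first the elementary identity $q(0)=p(0)$, which follows directly from \eqref{eq:pq} since the correction term $2\int_0^\theta \frac{\sin(u+\phi_0-\theta)}{\sqrt{u}}\du$ vanishes at $\theta=0$; this identity is what lets the sign of $p$ at the origin govern the behaviour of $q$. After that the statement splits into two kinds. The second bullet of $(a)$ and all of $(b)$ are obtained by checking that the hypotheses match those of the Meek--Walton Theorems~\ref{th:MW:2}--\ref{th:MW:4} and invoking them, whereas the first bullet of $(a)$ and the whole of $(c)$, the genuinely new pieces, are proved directly from the integral representation \eqref{eq:pq}. A preliminary step used throughout is the substitution $z=u-\theta$, which turns $p$ into
\[
  p(\theta)=\int_0^\delta\frac{\sin(z+\phi_0)}{\sqrt{z+\theta}}\dz ,
\]
exhibiting the sign of the integrand independently of $\theta$.

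For part $(a)$ I would argue as follows. Under $0\le\phi_0\le\phi_1\le\pi$ the argument $z+\phi_0$ runs over $[\phi_0,\phi_1]\subseteq[0,\pi]$, where $\sin(\cdot)\ge 0$; hence the integrand above is nonnegative and $p(\theta)\ge 0$ for every $\theta\ge0$. If $\phi_1>\phi_0$ then $\delta>0$ and $\sin(z+\phi_0)>0$ on a subinterval of positive length, so $p(\theta)>0$; if $\phi_1=\phi_0$ then $\delta=0$ and $p\equiv0$. The statement on $q$ in $(a)$ is exactly Theorem~\ref{th:MW:4} once one notes that $\phi_0\in[0,\pi)$ and $\phi_1\in(0,\pi]$ hold here (the degenerate corner $\phi_0=\phi_1=\pi$ being excluded by Assumption~\ref{ass:ordering}), so I would simply cite it.

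For part $(b)$ the hypothesis $-\pi\le-\phi_1<\phi_0<0$ is the regime $0<-\phi_0<\phi_1\le\pi$ of Theorems~\ref{th:MW:2} and~\ref{th:MW:3}. Using $q(0)=p(0)$, I split on the sign of $p(0)$. When $p(0)\le0$, Theorem~\ref{th:MW:2} yields a unique zero of $p$ in $[0,\theta^\star]$, with $\theta^\star$ the quantity $\theta_0$ of \eqref{eq:cond:theta}, while Theorem~\ref{th:MW:3} (with $q(0)<0$) gives that $q$ has no zero in $[0,\pi/2+\phi_0]$; the borderline value $q(0)=0$ must be accounted for separately, since it produces the trivial root $\theta=0$. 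When $p(0)>0$, Theorem~\ref{th:MW:2} says $p$ has no zero, and since $p$ is continuous with $p(0)>0$ the intermediate value theorem upgrades this to $p(\theta)>0$ for all $\theta\ge0$; Theorem~\ref{th:MW:3} (with $q(0)>0$) then supplies the unique zero of $q$. The only extra bookkeeping is the boundary $\phi_1=\pi$, where Theorems~\ref{th:MW:2}--\ref{th:MW:3} are stated with strict inequality, which I would settle by continuity in $\phi_1$.

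Finally, part $(c)$ is proved directly. Expanding $\sin(u+\phi_0)$ and using definition \eqref{eq:SC:alternative},
\[
  p(0)=\int_0^\delta\frac{\sin(u+\phi_0)}{\sqrt u}\du
      =\sqrt{2\pi}\,\bigl(\cos\phi_0\,\widehat\Sf(\delta)+\sin\phi_0\,\widehat\Cf(\delta)\bigr).
\]
For $\phi_0\in[-\pi,-\pi/2]$ one has $\cos\phi_0\le0$ and $\sin\phi_0\le0$, and since $\delta>0$ the Fresnel integrals satisfy $\widehat\Cf(\delta)>0$ and $\widehat\Sf(\delta)>0$; thus both terms are nonpositive, and because at least one of $\cos\phi_0,\sin\phi_0$ is strictly negative, $p(0)<0$, whence $q(0)=p(0)<0$. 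The main obstacle I foresee is precisely the positivity of the Fresnel integrals over the whole admissible range $\delta\in(0,2\pi]$: it is the one non-elementary fact the argument leans on, and it rests on the classical result that $\int_0^x\cos(t^2)\dt>0$ and $\int_0^x\sin(t^2)\dt>0$ for every $x>0$, which I would either cite or verify through the alternating-area (oscillation-damping) estimate for these integrals.
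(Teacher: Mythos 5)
Your proposal is correct and follows essentially the same route as the paper: parts $(a)$--$(b)$ by matching hypotheses to the Meek--Walton Theorems~\ref{th:MW:2}--\ref{th:MW:4} together with the trivial identity $q(0)=p(0)$, and part $(c)$ by the direct sign computation $p(0)=\sqrt{2\pi}\bigl(\cos\phi_0\,\widehat\Sf(\delta)+\sin\phi_0\,\widehat\Cf(\delta)\bigr)$ with $\cos\phi_0,\sin\phi_0\le 0$ and the Fresnel factors positive (the paper writes the same quantity as $\sqrt{\delta}\,g(\delta)=\Delta\widehat\Cf\sin\phi_0+\Delta\widehat\Sf\cos\phi_0$). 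You are in fact somewhat more careful than the paper's two-line proof: your direct positivity argument for $p$ in case $(a)$ via $p(\theta)=\int_0^\delta\sin(z+\phi_0)/\sqrt{z+\theta}\,\dz$ is genuinely needed, since Theorem~\ref{th:MW:2} assumes $\phi_0<0$ and none of the cited theorems literally covers that bullet, and your remarks on the borderline cases $q(0)=0$ and $\phi_1=\pi$ flag points the paper silently passes over.
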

\end{samepage}
\begin{proof}
  A straightforward application of Theorems~\ref{th:MW:2}, \ref{th:MW:3} and \ref{th:MW:4}.
  For point $(c)$, from~\eqref{eq:g:and:h}:
  %%%
  \begin{EQ}
    p(0)=q(0)=
    \sqrt{\delta}\,
    g(\delta)
    =
    \Delta\widehat\Cf\sin\phi_0+\Delta\widehat\Sf\cos\phi_0,
  \end{EQ}
  %%%
  in addiction, since $-\pi \leq \phi_0 \leq -\pi/2$, both $\sin\phi_0\leq 0$ and $\cos\phi_0\leq 0$
  resulting in $p(0)=q(0)<0$.
\end{proof}

%%%
The combination of Lemma~\ref{lem:reduce} together with reversed and mirrored problems, proves that the interpolation problem satisfies
Assumption~\ref{ass:ordering}.

With this Assumption and Lemma~\ref{lem:4.9} prove a Theorem that states the existence and uniqueness in a specified range. We show the special case of $\phi_{0}+\phi_{1}=0$ in a separated Lemma, the case of $\phi_{0}-\phi_{1}=0$ follows from the application of Theorem \ref{th:MW:4} for positive angles, because Assumption~\ref{ass:ordering} forces $\phi_{1}\geq 0$ and excludes the case of equal negative angles.

%%%
\begin{lemma}\label{lem:special1}
  Let $\phi_0+\phi_1=0$ and $\phi_0\in(-\pi,\pi)$, 
  then $g(A)=0$ has the unique solution $A=0$
  in the interval $(-2\pi,2\pi)$.
\end{lemma}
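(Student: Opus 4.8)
The plan is to exploit the extra symmetry created by the constraint $\phi_1=-\phi_0$, which forces $\delta=\phi_1-\phi_0=-2\phi_0$, equivalently $\phi_0=-\delta/2$. Recalling that $g(A)=Y(2A,\delta-A,\phi_0)=\int_0^1\sin\!\big(A\tau(\tau-1)+\delta\tau+\phi_0\big)\dtau$, I would first recenter the integration interval with the substitution $\tau=\tfrac12+t$, $t\in(-\tfrac12,\tfrac12)$. Since $\tau(\tau-1)=t^2-\tfrac14$ and $\delta\tau+\phi_0=\delta t$ (this is exactly where $\phi_0=-\delta/2$ is used), the phase becomes $A(t^2-\tfrac14)+\delta t$, a sum of an even and an odd function of $t$. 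Expanding the sine and integrating over the symmetric interval annihilates the odd contribution, leaving the compact representation
\[
  g(A)=2\int_0^{1/2}\sin\!\Big(A\big(t^2-\tfrac14\big)\Big)\cos(\delta t)\dt .
\]

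This representation does most of the work. It is manifestly odd in $A$, so $g(0)=0$ (which already exhibits $A=0$ as a root) and it suffices to prove $g(A)\neq0$ for $0<A<2\pi$; the corresponding claim on $(-2\pi,0)$ then follows by oddness. For $0<A<2\pi$ and $t\in(0,\tfrac12)$ one has $A(t^2-\tfrac14)\in(-\tfrac\pi2,0)$, so the factor $\sin(A(t^2-\tfrac14))$ is \emph{strictly negative}; setting $F(t)\DEF\sin\!\big(A(\tfrac14-t^2)\big)$, this factor equals $-F(t)$ with $F>0$ on $(0,\tfrac12)$, $F(\tfrac12)=0$, and $F'(t)=-2At\cos(A(\tfrac14-t^2))<0$ because the cosine argument stays in $(0,\tfrac\pi2)$.

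The sign of $g(A)=-2\int_0^{1/2}F(t)\cos(\delta t)\dt$ is then extracted by integration by parts. The boundary terms vanish because $F(\tfrac12)=0$ and $\sin0=0$, giving
\[
  \int_0^{1/2}F(t)\cos(\delta t)\dt=-\frac1\delta\int_0^{1/2}F'(t)\sin(\delta t)\dt .
\]
Because $\abs{\phi_0}<\pi$ forces $\abs{\delta}=2\abs{\phi_0}<2\pi$, the argument $\delta t$ stays in $(-\pi,\pi)$ on the half-interval, so $\sin(\delta t)$ keeps the sign of $\delta$ throughout $(0,\tfrac12)$; combined with $F'<0$ the integrand has one fixed sign, whence $g(A)<0$ strictly for every $0<A<2\pi$ (the degenerate case $\delta=0$ being immediate from the displayed formula for $g$). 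By oddness $g(A)>0$ on $(-2\pi,0)$, and therefore $A=0$ is the unique zero of $g$ in $(-2\pi,2\pi)$.

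The delicate point is the possible sign change of $\cos(\delta t)$: when $\abs{\phi_0}>\pi/2$ the weight $\cos(\delta t)$ is \emph{not} of one sign on $(0,\tfrac12)$, so a naive "positive integrand" argument fails. The integration by parts is precisely what removes this difficulty, trading $\cos(\delta t)$ for $\sin(\delta t)$, which \emph{is} sign-definite on the half-interval exactly because $\abs{\delta}<2\pi$. It is also worth noting where the endpoint $2\pi$ is used: the bound $A<2\pi$ is what confines $A(\tfrac14-t^2)$ to $(0,\tfrac\pi2)$, guaranteeing both $F>0$ and $F'<0$, so the interval $(-2\pi,2\pi)$ in the statement is tightly matched to the argument.
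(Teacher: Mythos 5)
Your proof is correct, and while it opens with the same symmetrization the paper uses, it closes the argument by a genuinely different route. Both proofs recenter the integration variable at the midpoint so that the constraint $\phi_0=-\delta/2$ makes the linear part of the phase odd, which kills the cross term and yields the same representation $g(A)=2\int_0^{1/2}\sin\big(A(t^2-\tfrac14)\big)\cos(\delta t)\dt$ (the paper's $\int_0^1\sin\big(A(z^2-1)/4\big)\cos(z\phi_0)\dz$ under $z=2t$). From there the paper argues by cases: for $|\phi_0|<\pi/2$ the weight $\cos(z\phi_0)$ is positive and the integrand is sign-definite; for $\pi/2\le|\phi_0|<\pi$ it splits the integral at the zero $z=\pi/(2|\phi_0|)$ of the cosine and shows the first piece dominates the second in absolute value, using that $\int_0^{\pi/(2|\phi_0|)}\cos(z\phi_0)\dz=1/|\phi_0|$ exceeds $\int_{\pi/(2|\phi_0|)}^1|\cos(z\phi_0)|\dz=(1-\sin|\phi_0|)/|\phi_0|$ together with the monotone decrease of $|\sin(A(z^2-1)/4)|$ in $z$ (valid precisely for $|A|<2\pi$). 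You instead integrate by parts once, exploiting that the boundary terms vanish, to move the derivative onto the factor $F(t)=\sin(A(\tfrac14-t^2))$; this trades the possibly sign-changing weight $\cos(\delta t)$ for $\sin(\delta t)$, which is sign-definite on $(0,\tfrac12)$ exactly because $|\delta|<2\pi$, while $F'<0$ follows from the same restriction $|A|<2\pi$ that the paper needs for monotonicity. The net effect is that you obtain the strict sign $g(A)<0$ for $0<A<2\pi$ uniformly in $\phi_0\in(-\pi,\pi)$ with no sub-case on $|\phi_0|$, and the explicit oddness of $g$ in $A$ cleanly dispatches the negative half of the interval; the paper's comparison-of-magnitudes argument reaches the same conclusion but is more delicate, since it rests on a near-tight inequality between the two pieces of the split integral. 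Both uses of the hypotheses land in the same place: $|A|<2\pi$ controls the monotonicity (your $F'<0$, their decreasing $|\sin|$), and $|\phi_0|<\pi$ controls the weight (your sign-definiteness of $\sin(\delta t)$, their strict inequality $1-\sin|\phi_0|<1$).
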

\begin{proof}
  For $\phi_0+\phi_1=0$ we have $\delta=-2\phi_0$ and
  %%%
  \begin{EQ}[rcl]
     g(A) &=& Y(2A,-2\phi_0-A,\phi_0), \\
     &=&
     \int_0^1 \sin( A\tau (\tau-1) +\phi_0(1-2\tau)) \dtau, \\
     &=&
     \int_{-1}^1 \sin( A(z^2-1)/4 - z\phi_0) \frac{\dz}{2}, \qquad[\tau=(z+1)/2]\\
     &=&
     \int_{-1}^1 \sin( A(z^2-1)/4 )\cos(z\phi_0) \frac{\dz}{2}
     -
     \int_{-1}^1 \cos( A(z^2-1)/4 )\sin(z\phi_0) \frac{\dz}{2}.
  \end{EQ}
  %%%
  Using properties of odd and even functions the rightmost integral of the previous line vanishes yielding
  %%%
  \begin{EQ}
     g(A) = \int_0^1 \sin( A(z^2-1)/4 )\cos(z\phi_0) \dz.
  \end{EQ}
  %%%
  From this last equality, if $A=0$ then $g(A)=0$.
  If $0<\abs{A}<4\pi$, the sign of the quantity $\sin( A(z^2-1)/4 )$ is constant;
  if $\abs{\phi_0}<\pi/2$, then $\cos(z\phi_0)>0$ and thus
  $g(A)$ has no roots. For the remaining values of $\phi_0$, i.e. $\pi/2\leq\abs{\phi_0}<\pi$, we have:
  %%%
  \begin{EQ}
     \int_0^{\pi/(2\abs{\phi_0})}\cos(z\phi_0) \dz
     = \dfrac{1}{\abs{\phi_0}},
     \qquad
     \int_{\pi/(2\abs{\phi_0})}^1\abs{\cos(z\phi_0)} \dz
     =\dfrac{1-\sin\abs{\phi_0}}{\abs{\phi_0}}<\dfrac{1}{\abs{\phi_0}}.
  \end{EQ}
  %%%
  If in addition, $0<\abs{A}<2\pi$ then $\abs{\sin( A(z^2-1)/4 )}$ is positive and 
  monotone decreasing so that:
  %%%
  \begin{EQ}[rcl]
     \bigg|\int_0^{\pi/(2\abs{\phi_0})} \sin\Big(\dfrac{A}{4}(z^2-1)\Big)\cos(z\phi_0) \dz\bigg|
     &\geq& 
     \dfrac{C}{\abs{\phi_0}},
     \\
     \bigg|\int_{\pi/(2\abs{\phi_0})}^1 \sin\Big(\dfrac{A}{4}(z^2-1)\Big)\cos(z\phi_0) \dz\bigg|
     &<&
     \dfrac{C}{\abs{\phi_0}},
  \end{EQ}
  %%%
  where
  %%%
  \begin{EQ}
     C=\Big|\sin\Big(\dfrac{A}{16\abs{\phi_0}^2}\left(\pi^2-4\abs{\phi_0}^2\Big)\right)\Big|>0,
  \end{EQ}
  %%%
  and thus $g(A)\neq 0$ for $0<\abs{A}<2\pi$ and $\abs{\phi_0}<\pi$.
\end{proof}

We state now the main Theorem of this study.
%%%
\begin{theorem}[Existence and uniqueness of solution for system~\eqref{eq:g}]\label{teo:ex:uniq}
  The function \allowbreak$g(A)$, when angles $\phi_0$ and $\phi_1$ satisfy assumption~\ref{ass:ordering},
  admits a unique solution for $A\in(0,A_{\max}]$, where
  %%%
  \begin{EQ}\label{eq:A:interval}
     A_{\max} = \delta + 2\theta_{\max}\left(1+\sqrt{1+\delta/\theta_{\max}}\right),
     \qquad
     \theta_{\max} = \max\Big\{0,\pi/2+\phi_0\Big\}.
  \end{EQ}
  %%%
  Moreover $h(A)>0$ where $g(A)=0$.
\end{theorem}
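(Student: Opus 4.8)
The plan is to transport the root-counting statements of Lemma~\ref{lem:4.8} (which package Theorems~\ref{th:MW:2}--\ref{th:MW:4}) from the variable $\theta$ back to the variable $A$ through the substitution $\theta=(\delta-A)^2/(4A)$ introduced in Lemma~\ref{lem:4.9}. First I would record the monotonicity of this substitution: since $\mathrm d\theta/\mathrm dA=(A^2-\delta^2)/(4A^2)$, the map $A\mapsto\theta$ is a decreasing bijection from the ``$p$-branch'' $(0,\delta]$ onto $[0,\infty)$ and an increasing bijection from the ``$q$-branch'' $[\delta,\infty)$ onto $[0,\infty)$, the two meeting at $A=\delta\leftrightarrow\theta=0$. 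Because $\sqrt{2\pi}/\sqrt A>0$, the representation~\eqref{eq:gA:cases} then shows that the zeros of $g$ on $(0,\delta]$ are exactly the zeros of $p(\theta)$ on $[0,\infty)$, while the zeros of $g$ on $[\delta,\infty)$ are exactly the zeros of $q(\theta)$ on $[0,\infty)$.

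Next I would certify the endpoint $A_{\max}$. Solving $(\delta-A)^2=4A\theta$ gives $A=\delta+2\theta\pm2\sqrt{\theta(\theta+\delta)}$, and the larger root (the one living on the $q$-branch), evaluated at $\theta=\theta_{\max}$, reproduces exactly the expression~\eqref{eq:A:interval}; in the degenerate case $\theta_{\max}=0$ this collapses to $A_{\max}=\delta$. Hence $A\in(0,A_{\max}]$ corresponds to the \emph{whole} $p$-branch $(0,\delta]$ together with the portion of the $q$-branch on which $\theta\in[0,\theta_{\max}]$, where $\theta_{\max}=\max\{0,\pi/2+\phi_0\}$. Counting zeros of $g$ in $(0,A_{\max}]$ therefore reduces to counting zeros of $p$ on $[0,\infty)$ plus zeros of $q$ on $[0,\theta_{\max}]$.

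The counting is then a case split on $\phi_0$ via Lemma~\ref{lem:4.8}. When $0\leq\phi_0\leq\phi_1$ (case (a)), $p>0$ on $[0,\infty)$ so the $p$-branch is root-free, while $q$ has a unique zero in $[0,\pi/2+\phi_0]=[0,\theta_{\max}]$; this yields exactly one zero. When $-\phi_1<\phi_0<0$ (case (b)) I would split on the sign of $p(0)=q(0)=\sqrt{\delta}\,g(\delta)$: if it is $\leq0$ then $p$ has a unique zero on $[0,\infty)$ and $q$ has none on $[0,\theta_{\max}]$; if it is $>0$ then $p>0$ everywhere and $q$ has a unique zero on $[0,\theta_{\max}]$; either way exactly one zero. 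The possibility $\phi_0\leq-\pi/2$ falls automatically in the first sub-case by Lemma~\ref{lem:4.8}(c), and there $\theta_{\max}=0$, $A_{\max}=\delta$, so the admissible interval is just the $p$-branch and carries the single zero. The boundary situations $\phi_0\pm\phi_1=0$ are dispatched by Lemma~\ref{lem:special1} and by Theorem~\ref{th:MW:4} for positive angles, as announced. This gives existence and uniqueness on $(0,A_{\max}]$.

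It remains to prove $h(A^\star)>0$ at the zero $A^\star$ of $g$, which is what guarantees a positive length $L=r/h(A^\star)$; this is the delicate part. Writing $(\Delta\widehat\Cf,\Delta\widehat\Sf)=R(\cos\psi,\sin\psi)$ with $R\geq0$, the formulas~\eqref{eq:g:and:h} read $\sqrt A\,g=R\sin\chi$ and $\sqrt A\,h=R\cos\chi$ with $\chi=\phi_0-\theta+\psi$, so $g^2+h^2=R^2/A$. At a zero of $g$ one has $\sin\chi=0$, hence $h=\pm R/\sqrt A$, and the whole issue is to show the sign is $+$ and that $R>0$. I would first rule out $R=0$ (simultaneous vanishing of $g$ and $h$) on $(0,A_{\max}]$: this is $\int_0^1 e^{\ii\psi(\tau)}\dtau=0$ for the tangent angle $\psi(\tau)=A\tau(\tau-1)+\delta\tau+\phi_0$, which the bound $A\leq A_{\max}$ precludes because the angle does not span enough of a turn for the unit tangent to average to zero. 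With $R>0$ fixed, I would pin the sign of $\cos\chi$ by tracking the phase $\chi$ as $A$ runs through the admissible interval: using the localization $\theta^\star\leq\theta_{\max}=\pi/2+\phi_0$, $\chi$ stays within a single ``forward'' half-turn about $0$, so the crossing $\sin\chi=0$ occurs at $\chi\equiv0\pmod{2\pi}$ rather than at $\chi\equiv\pi$, giving $h=+R/\sqrt A>0$. I expect this phase-tracking/no-full-turn estimate to be the main obstacle, since, unlike the root count, it cannot be read off the Meek--Walton theorems and requires controlling the Fresnel-type quantities $\Delta\widehat\Cf,\Delta\widehat\Sf$ over the entire admissible range of $A$.
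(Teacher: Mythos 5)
Your reduction of the root count to the $\theta$-variable is exactly the paper's argument: the same bijections between the $p$-branch $(0,\delta]$ and $[0,\infty)$ and between the $q$-branch and $[0,\theta_{\max}]$, the same identification of $A_{\max}$ as the larger root $A_2$ of $(\delta-A)^2=4A\theta_{\max}$, and the same case split through Lemma~\ref{lem:4.8} with the special lines $\phi_0\pm\phi_1=0$ dispatched separately. That half of your proposal is complete and correct.

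The gap is in the claim $h(A^\star)>0$, and you have located it yourself: your phase-tracking plan leaves unproven precisely the two facts it needs, namely that $\bigl(\Delta\widehat\Cf,\Delta\widehat\Sf\bigr)\neq(0,0)$ and that the zero of $\sin\chi$ occurs at $\chi\equiv 0$ rather than $\chi\equiv\pi$. The ``not enough of a turn'' heuristic does not close this: on the $p$-branch the tangent angle $A\tau(\tau-1)+\delta\tau+\phi_0$ sweeps an interval of length $\delta+\theta$, which can exceed $\pi$, so a sign flip of $\cos\chi$ cannot be excluded by a crude span bound, and on that branch $\sigma_-=+1$ makes $\Delta\widehat\Cf,\Delta\widehat\Sf$ genuine differences whose signs are not controlled. (On the $q$-branch your argument does work and coincides with the paper's: there $\sigma_-=-1$ gives $\Delta\widehat\Cf,\Delta\widehat\Sf>0$, hence $\psi\in(0,\pi/2)$, and $\phi_0-\theta\in[-\pi/2,0]$ forces $\cos\chi>0$ without even using $g=0$.) The paper closes the $p$-branch case by a device your sketch misses: at a root of $p$ one has $\int_{\phi_0}^{\phi_1}\frac{\sin z}{\sqrt{z+\theta-\phi_0}}\,\mathrm{d}z=0$, so for \emph{any} $w$
\begin{EQ}
  \overline p(\theta)=\int_{\phi_0}^{\phi_1}\dfrac{\cos z-w\sin z}{\sqrt{z+\theta-\phi_0}}\,\mathrm{d}z,
\end{EQ}
and the choice $w=\cot\phi_0>0$ (for $-\pi<\phi_0<-\pi/2$; the case $\phi_1\le\pi/2$ being immediate) makes the numerator vanish at $z=\phi_0,\phi_0+\pi,\phi_0+2\pi$ and stay positive on $(\phi_0,\phi_0+\pi)$, after which extending the integral to $\phi_0+2\pi$ and using the monotonicity of the weight $1/\sqrt{z+\theta-\phi_0}$ gives strict positivity. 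This also disposes of your separate worry about simultaneous vanishing of $g$ and $h$, since positivity of $\overline p$ (resp.\ $\overline q$) at the root is proved directly. To be complete, your proof would need to supply an argument of comparable strength for the $p$-branch; as written it is a plan, not a proof, for the final assertion of the theorem.
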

\begin{proof}
The special cases $\phi_{0}+\phi_{1}=0$ and $\phi_{0}=\phi_{1}$ were previously considered  
and in Lemma~\ref{lem:special1}. The other cases are discussed next.
From Lemma~\ref{lem:4.8} it follows that the two equations
%%%
\begin{EQ}[rcll]
  p(\theta) &=& 0, \qquad &\theta \geq 0, \\
  q(\theta) &=& 0, \qquad &\theta \in[0,\theta_{\max}],
\end{EQ}
%%%
cannot be satisfied by the same $\theta$ in the specified range, so that they are mutually exclusive although one of the two is satisfied. Thus $g(A)=0$ has a unique solution. To find the equivalent range of $A$ we select the correct solution of $(\delta-A)^2=4A\theta_{\max}$. The two roots are:
  %%%
  \begin{EQ}[rcl]\label{eq:Asol}
   A_1 &=& 2\theta_{\max}+\delta-2\sqrt{\theta_{\max}^2+\theta_{\max}\delta}
     = \dfrac{\delta^2}{2\theta_{\max}+\delta+2\sqrt{\theta_{\max}^2+\theta_{\max}\delta}}\leq \delta
     \\
   A_2 &=& 2\theta_{\max}+\delta+2\sqrt{\theta_{\max}^2+\theta_{\max}\delta}\geq \delta,
  \end{EQ}
  %%%
  and thus $A_2$ is used to compute $A_{\max}$. We are now interested to check if $h(A)>0$ when $g(A)=0$. According to Lemma \ref{lem:4.9},
  \begin{EQ}
   h(A)
    =\cases{ \overline p\left(\frac{(\delta-A)^2}{4A}\right) = \displaystyle\int_{\phi_0}^{\phi_1}\dfrac{\cos z}{\sqrt{z+\theta-\phi_0}}\dz, & $0<A\leq\delta$, \\
             \overline q\left(\frac{(\delta-A)^2}{4A}\right), & $A\geq\delta$.}
  \end{EQ}  
Suppose we set the root  $\theta$ of $g$ (by doing so, we set also the value of $A$), we have to show that if $A\leq \delta$ then $\overline p(\theta)>0$ and if $A\geq \theta$ then $\overline q(\theta)>0$.\\
 We begin analysing $A\leq \delta$ for $\abs{\phi_0}<\phi_1\leq \frac{\pi}{2}$. In this case we have that the cosine in the numerator is always positive, and so is the square root at the denominator, thus the integral $\overline p(\theta)$ is strictly positive. When $-\pi<\phi_0<-\frac{\pi}{2}$ then $\phi_1\geq \frac{\pi}{2}$, we can write, for all $w\in\mathbb{R}$, 
%%%
\begin{EQ}\label{int:sincos}
  \int_{\phi_0}^{\phi_1}\dfrac{\cos z}{\sqrt{z+\theta-\phi_0}}\dz
  = \int_{\phi_0}^{\phi_1}\dfrac{\cos z-w\sin z}{\sqrt{z+\theta-\phi_0}}\dz.
\end{EQ}
%%%
In particular we can choose $w=\frac{\cos \phi_0}{\sin \phi_0}>0 $ positive (which, incidentally is always positive because $-\pi < \phi_{0}< -\frac{\pi}{2}$) so that the integrand function vanishes for the three values $z=\phi_0$, $\phi_0+\pi$, $\phi_0+2\pi$. Moreover $\cos z-w\sin z$ is strictly positive
for $z\in(\phi_0,\phi_0+\pi)$, see Figure~\ref{fig:h}.
%%%
\begin{figure}[!tcb]
  \begin{center}
    \includegraphics[scale=0.7]{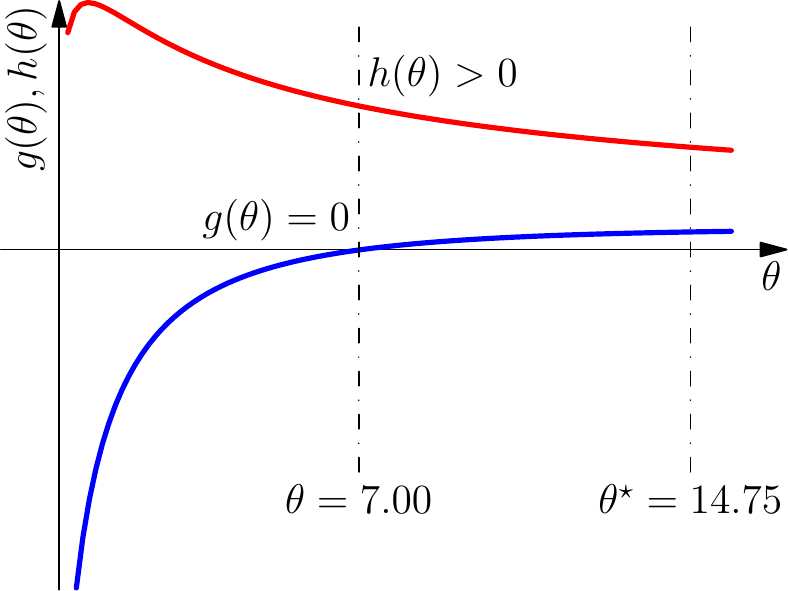} \qquad
    \includegraphics[scale=0.7]{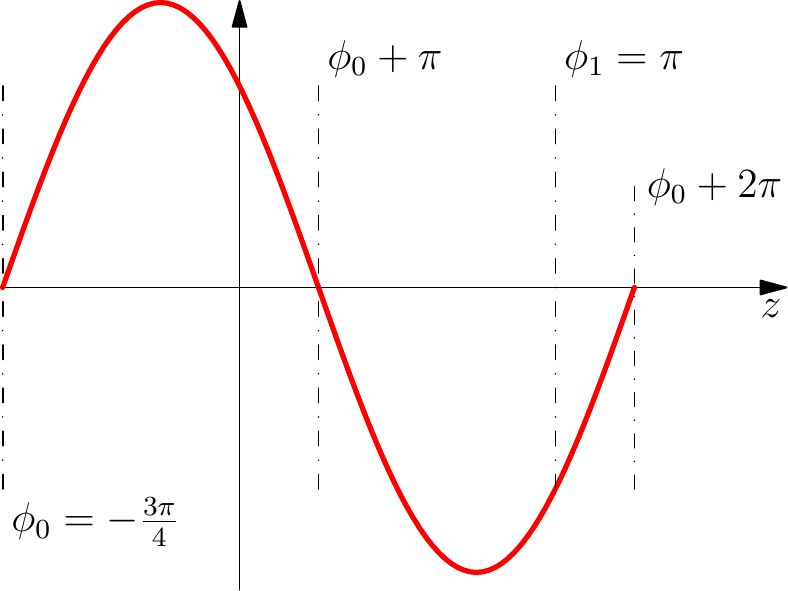}
  \end{center}
  \caption{Left: functions $g$ and $h$, when $g$ vanishes, $h$ is strictly
           positive. Right: the plot of $\cos z-w\sin z$. In both figures
           $\phi_0=-(3/4)\pi$ and $\phi_1 = \pi$.
           }
  \label{fig:h}
\end{figure}
%%%
Thus we can bound integral \eqref{int:sincos} as
%%%
\begin{EQ}[rcl]
 \int_{\phi_0}^{\phi_1}\dfrac{\cos z-w\sin z}{\sqrt{z+\theta-\phi_0}}\dz &>&
 \int_{\phi_0}^{\phi_0+2\pi}\dfrac{\cos z-w\sin z}{\sqrt{z+\theta-\phi_0}}\dz,\\ 
& \geq &  \dfrac{\int_{\phi_0}^{\phi_0+2\pi}\cos z-w\sin z\dz}{\sqrt{(\phi_0+\pi)+\theta-\phi_0}}=0.
\end{EQ}
%%%
The last subcase of $A\leq \delta$ occurs when both $\phi_0,\phi_1>\frac{\pi}{2}$, but it is not necessary because Lemma \ref{lem:4.8} at point (a) states that $p(\theta)>0$ does not vanish. We discuss now the second case, when $A\geq \delta$, it is recalled that from \eqref{eq:g:and:h}
%%%
\begin{EQ}
h(A)=\Delta\widehat\Cf\cos\left(\phi_0-\theta\right)-\Delta\widehat\Sf\sin\left(\phi_0-\theta\right),
\end{EQ}
with $\Delta\widehat\Cf,\Delta\widehat\Sf>0$ and for $\theta\in [0,\frac{\pi}{2}+\phi_0]$ we can write $-\frac{\pi}{2}\leq \phi_0-\theta\leq 0$, thus the cosine is positive and the sine is negative, hence the whole quantity is strictly positive.
\end{proof}

\begin{corollary}\label{lem:LAB}
  All the solutions of the nonlinear system \eqref{eq:F:nonlin:6}
  are given by
  %%%
  \begin{EQ}\label{eq:LAB}
    L = \dfrac{\sqrt{\Delta x^2+\Delta y^2}}{ X(2A,\delta-A,\phi_0)},\qquad
    \kappa = \dfrac{\delta-A}{L}, \qquad
    \kappa' = \dfrac{2\,A}{L^2},\qquad
  \end{EQ}  
  %%%
  where $A$ is any root of $g(A)\DEF Y(2A,\delta-A,\phi_0)$ provided that the corresponding 
  $h(A)\DEF X(2A,\delta-A,\phi_0)>0$.
\end{corollary}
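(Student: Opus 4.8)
The plan is to show that the change to polar coordinates performed in Section~\ref{sec:3} sets up an exact correspondence between the solutions of the planar system \eqref{eq:F:nonlin:6} and the roots of the scalar function $g$ satisfying $h(A)>0$, and then to read off $L$, $\kappa$ and $\kappa'$ by back-substitution. The argument is a biconditional: every admissible root of $g$ produces a solution via \eqref{eq:LAB}, and every solution of \eqref{eq:F:nonlin:6} arises in this way.

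First I would exploit that the two vectors $(\cos\varphi,\sin\varphi)$ and $(\sin\varphi,-\cos\varphi)$ form an orthonormal basis of $\RealField^2$. Consequently $\bm{G}(L,A)=\bm{0}$ holds if and only if both of its projections onto these directions vanish, that is, if and only if $f(L,A)=0$ and $L\,g(A)=0$. Because Problem~\ref{prob:1} requires $L>0$, the second condition reduces exactly to $g(A)=0$, so no solution is gained or lost in passing from the vector equation to the pair of scalar equations. This is the step where the hypothesis $L>0$ is essential, and it is the only place where any care is really needed; the remainder is bookkeeping.

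Next, assuming $g(A)=0$, I would determine $L$ from $f(L,A)=0$. By the simplified expression in \eqref{eq:g} one has $f(L,A)=\sqrt{\Delta x^2+\Delta y^2}-L\,h(A)$, so $f(L,A)=0$ is equivalent to $L\,h(A)=\sqrt{\Delta x^2+\Delta y^2}$. Since the two interpolation points are distinct, the right-hand side is strictly positive; hence a \emph{positive} $L$ exists and is uniquely determined precisely when $h(A)>0$, which yields the first formula in \eqref{eq:LAB}. Conversely, a root $A$ with $h(A)\le 0$ produces either no admissible $L$ or a non-positive one and must be discarded, and this is exactly what the proviso $h(A)>0$ in the statement encodes.

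Finally I would recover the remaining unknowns by inverting the changes of variable introduced before \eqref{eq:F:nonlin:3:pre}. From $B=\delta-A$ together with the defining relations $A=\tfrac{1}{2}\kappa' L^2$ and $B=L\,\kappa$ one obtains $\kappa=(\delta-A)/L$ and $\kappa'=2A/L^2$, the remaining two formulas in \eqref{eq:LAB}. Tracing the equivalences backward confirms that each resulting triple $(L,\kappa,\kappa')$ genuinely solves \eqref{eq:F:nonlin:6}, so the correspondence is exact and the claim follows. I expect no substantive obstacle beyond the careful handling of the $L>0$ constraint already isolated in the projection step.
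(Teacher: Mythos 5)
Your proposal is correct and follows essentially the same route as the paper, which leaves this corollary without an explicit proof precisely because it is the back-substitution of the reduction already carried out in Section~\ref{sec:3} (projection of $\bm{G}$ onto the orthonormal pair $(\cos\varphi,\sin\varphi)$, $(\sin\varphi,-\cos\varphi)$, then recovery of $L$, $\kappa$, $\kappa'$ from $f=0$, $B=\delta-A$, $A=\tfrac12\kappa'L^2$, $B=L\kappa$). Your explicit handling of the $L>0$ constraint and of the case $h(A)\le 0$ is a faithful spelling-out of what the paper takes for granted.
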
  
\begin{corollary}\label{lem:LAB:1}
  If the angles $\phi_0$ and $\phi_1$ are 
  in the range $[-\pi,\pi]$, with the exclusion of the points $\phi_0=-\phi_1=\pm\pi$,
  the solution exists and is unique 
  for $-A_{\max} \leq A \leq A_{\max}$ where 
  %%%
  \begin{EQ}[rcl]
     A_{\max} &=& \abs{\phi_1-\phi_0} + 2\theta_{\max}\left(1+\sqrt{1+\abs{\phi_1-\phi_0}/\theta_{\max}}\right),
     \\
     \theta_{\max} &=& \max\Big\{0,\pi/2+\SIGN(\phi_1)\phi_0\Big\}.
  \end{EQ}
  %%%
\end{corollary}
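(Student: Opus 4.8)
The plan is to obtain this statement from Theorem~\ref{teo:ex:uniq}, which already settles the normalized configuration of Assumption~\ref{ass:ordering}, by transporting that conclusion to an arbitrary admissible pair $(\phi_0,\phi_1)\in[-\pi,\pi]^2$ through the symmetries recorded in Lemma~\ref{lem:reduce}. First I would note that the identity, the mirrored map $(\phi_0,\phi_1)\mapsto(-\phi_0,-\phi_1)$, the reversed map $(\phi_0,\phi_1)\mapsto(-\phi_1,-\phi_0)$ and their composition act as a four-element group on the angle square, and that one of them carries every admissible pair into the triangle $\abs{\phi_0}\le\phi_1\le\pi$; the omitted corner $\phi_0=-\phi_1=\pm\pi$ is precisely the ambiguous vertex already excluded in Assumption~\ref{ass:ordering}. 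The two diagonals are disposed of separately, $\phi_0+\phi_1=0$ by Lemma~\ref{lem:special1} (root $A=0$, unique in $(-2\pi,2\pi)$) and $\phi_0=\phi_1$ by Theorem~\ref{th:MW:4}, so I may assume the pair lies strictly inside one of the four regions.

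On the normalized angles $(\tilde\phi_0,\tilde\phi_1)$, Theorem~\ref{teo:ex:uniq} furnishes a unique valid root $\tilde A^\star$ of $\tilde g$ in $(0,\tilde A_{\max}]$ with $\tilde h(\tilde A^\star)>0$, where $\tilde A_{\max}=\tilde\delta+2\tilde\theta_{\max}(1+\sqrt{1+\tilde\delta/\tilde\theta_{\max}})$, $\tilde\delta=\tilde\phi_1-\tilde\phi_0$ and $\tilde\theta_{\max}=\max\{0,\pi/2+\tilde\phi_0\}$. Each symmetry in Lemma~\ref{lem:reduce} obeys $g(A)=-\tilde g(-A)$ together with $h(A)=\tilde h(-A)$, so the corresponding root of the original $g$ is $A^\star=\pm\tilde A^\star$ and, crucially, the selection criterion is carried along unchanged, $h(A^\star)=\tilde h(\tilde A^\star)>0$; this simultaneously reproves the final clause and connects back to Corollary~\ref{lem:LAB}. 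Since the orientation of the reduction (hence the sign of $A^\star$) depends on the region while the transport is the reflection $A\mapsto-A$, the localization can only be asserted on the symmetric window $[-\tilde A_{\max},\tilde A_{\max}]$, and uniqueness among valid roots there is inherited from uniqueness of $\tilde A^\star$ via this bijective reflection.

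It then remains to re-express $\tilde A_{\max}$ in the original data. Every symmetry either fixes or negates $\delta$, so $\tilde\delta=\abs{\phi_1-\phi_0}$ uniformly; identifying the normalized first angle as $\tilde\phi_0=\SIGN(\phi_1)\phi_0$ turns $\tilde\theta_{\max}$ into the stated $\theta_{\max}$ and $\tilde A_{\max}$ into the stated $A_{\max}$. I expect the real work to concentrate in precisely this last identification. One must verify, region by region, that $\tilde\phi_1=\max\{\abs{\phi_0},\abs{\phi_1}\}$ and that $\tilde\phi_0$ is the companion angle bearing the sign of the dominant one, so that the single closed form $\SIGN(\phi_1)\phi_0$ is valid. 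This bookkeeping is immediate when $\abs{\phi_1}\ge\abs{\phi_0}$ (identity and mirror); in the remaining regions the reversed map must be applied \emph{first} to restore the ordering $\abs{\phi_1}\ge\abs{\phi_0}$ before Theorem~\ref{teo:ex:uniq} is invoked, since otherwise $\theta_{\max}$ is underestimated and the window falls short of the true root. Getting this sign-and-ordering accounting exactly right, rather than the elementary analytic estimates, is the crux of the argument.
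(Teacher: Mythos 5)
Your route --- normalize the angles with the reverse/mirror symmetries of Lemma~\ref{lem:reduce} so that Assumption~\ref{ass:ordering} holds, apply Theorem~\ref{teo:ex:uniq}, and pull the root back through the reflection $A\mapsto -A$ --- is exactly the derivation the paper intends (the corollary is stated there without an explicit proof), and several of your intermediate observations are correct: the symmetries form a four-element group covering the angle square, $h$ transports without a sign change so the selection criterion $h>0$ survives, and $\tilde\delta=\abs{\phi_1-\phi_0}$ in every region.

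The gap sits precisely at the step you yourself call the crux and then defer. The identification $\tilde\phi_0=\SIGN(\phi_1)\phi_0$ is valid only in the two regions with $\abs{\phi_1}\geq\abs{\phi_0}$ (identity and mirror). In the regions with $\abs{\phi_0}>\abs{\phi_1}$ the normalizing map is the reverse $(\phi_0,\phi_1)\mapsto(-\phi_1,-\phi_0)$ or the swap $(\phi_0,\phi_1)\mapsto(\phi_1,\phi_0)$, so the normalized first angle is $\pm\phi_1$, not $\pm\phi_0$, and the window that Theorem~\ref{teo:ex:uniq} actually provides is governed by $\max\{0,\pi/2+\SIGN(\phi_0)\phi_1\}$. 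These do not coincide with the stated $\theta_{\max}$: for $(\phi_0,\phi_1)=(-2,1)$ the reverse map gives $(\tilde\phi_0,\tilde\phi_1)=(-1,2)$, and the theorem localizes the root only in $(0,\tilde A_{\max}]$ with $\tilde\theta_{\max}=\pi/2-1$, hence $\tilde A_{\max}\approx 7$, while the formula in the statement yields $\theta_{\max}=\max\{0,\pi/2-2\}=0$ and $A_{\max}=3$. So the transported conclusion does not deliver the stated window, and the ``region-by-region verification'' you postpone would refute the single closed form rather than confirm it; to close the argument you must either restrict to $\abs{\phi_1}\geq\abs{\phi_0}$ or supply an independent reason why the root still falls inside the smaller stated interval in the remaining regions, and neither is present. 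A second, lesser gap: uniqueness is asserted on the symmetric interval $[-A_{\max},A_{\max}]$, but the reflection only transports uniqueness from $(0,\tilde A_{\max}]$ to $[-\tilde A_{\max},0)$; excluding a further valid root in the opposite half of the window requires an argument, and your parenthetical ``among valid roots'' is doing unacknowledged work there.
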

%%%
Notice that the solution of the nonlinear
system~\eqref{eq:F:nonlin:6} is reduced to the solution of $g(A)=0$.
Hence the interpolation problem is reduced 
to a single \emph{nonlinear} equation that can be solved 
numerically with Newton--Raphson Method; also, Corollary~\ref{lem:LAB:1}
ensures that $L>0$ for $A$ in the specified range.
It can also be proved that in general for all $A$ such that $g(A)=0$ 
then $h(A)\neq 0$.
%%%
In the next section, a technique to select a meaningful initial
point is described.

\section{Computing the initial guess for iterative solution}
\label{sec:5}
%%%
\begin{figure}[!bcb]
  \begin{center}
    \includegraphics[scale=0.42]{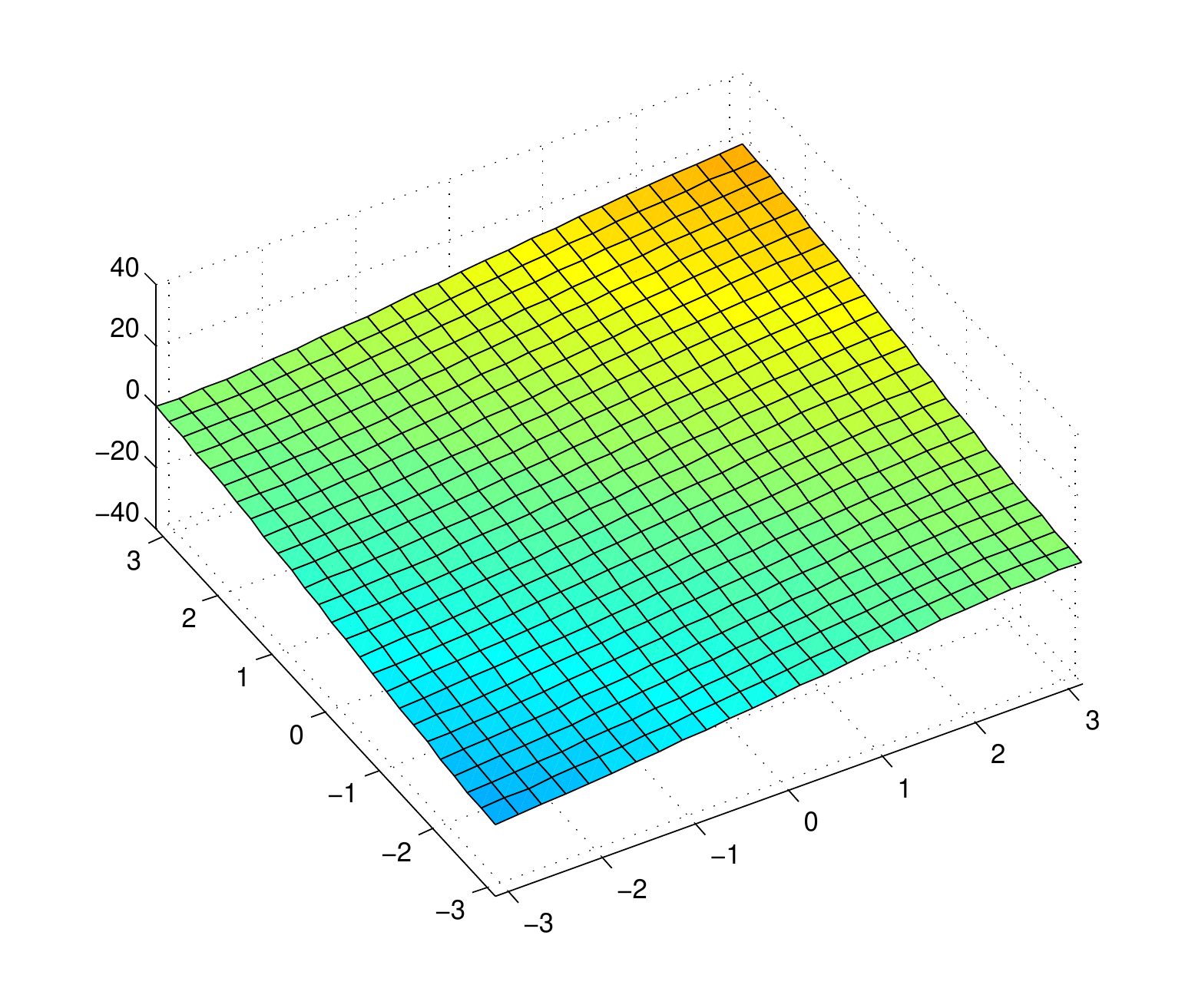}
    \includegraphics[scale=0.42]{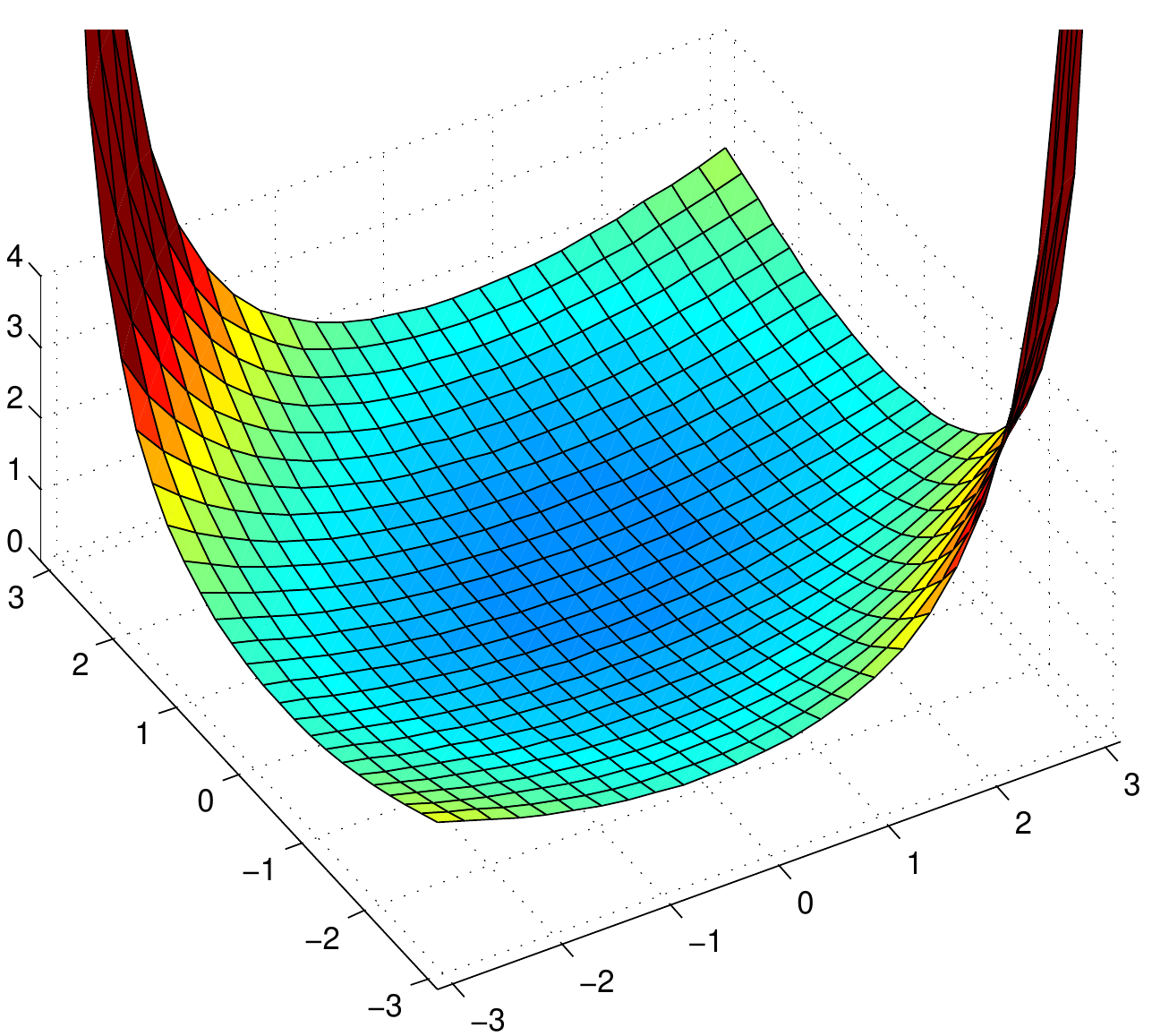}
  \end{center}
  \caption{Left: the function $\mathcal{A}(\phi_0,\phi_1)$.
           Notice that $\mathcal{A}(\phi_0,\phi_1)$ is approximately a plane.
           Right: values of the length $L$ of the computed clothoid as a function of $\phi_0$ and $\phi_1$.
           Notice that when angles satisfy $\phi_0=\pi,\,\phi_1=-\pi$ or $\phi_0=-\pi,\,\phi_1=\pi$
           the length goes to infinity. 
           The angles range in $[-\pi,\pi]$.           
           }
  \label{fig:AL}
\end{figure}
%%%
The zeros of function $g(A)$ are used to solve the interpolation problem 
and are approximated by the Newton-Raphson scheme. This algorithm needs ``a guess point'' 
to converge to the appropriate solution.
Notice that there is an infinite number of solutions of Problem~\ref{prob:1}
and we need criteria for the selection of a solution.
By Theorem~\ref{teo:ex:uniq} we reduce the problem to standard angles
and search the unique solution in the appropriate range.

Denote with $\mathcal{A}(\phi_0,\phi_1)$ the selected zero of $g(A)\DEF Y(2A,\delta-A,\phi_0)$
as a function of $\phi_0$ and $\phi_1$.
Figure~\ref{fig:AL} shows that $\mathcal{A}(\phi_0,\phi_1)$ is approximated by a plane.
A simple approximation of $\mathcal{A}(\phi_0,\phi_1)$ is obtained by  
$\sin x \approx x$ in $Y(2A,\delta-A,\phi_0)$ and thus,
%%%
\begin{EQ}
   g(A) = Y(2A,\delta-A,\phi_0)
        \approx \int_0^1 A\tau^2 + (\delta-A)\tau + \phi_0 \dtau
        =\dfrac{\phi_0+\phi_1}{2} -\dfrac{A}{6},
\end{EQ}
%%%
and solving for $A$
%%%
\begin{EQ}\label{eq:A:approx:a}
  \mathcal{A}(\phi_0,\phi_1)\approx 3(\phi_0+\phi_1),
\end{EQ}
%%%
This approximation is a fine initial point for Newton-Raphson, however
better approximation for $\mathcal{A}(\phi_0,\phi_1)$ are obtained
by least squares method.
Invoking reflection and mirroring properties, the functional form of the approximation
is simplified and results in the two following expressions
for $\mathcal{A}(\phi_0,\phi_1)$:
%%%
\begin{EQA}[rcl]\yesnumber
  \mathcal{A}(\phi_0,\phi_1) &\approx& (\phi_0+\phi_1)
  \Big(c_1+c_2\overline{\phi}_0\overline{\phi}_1+c_3(\overline{\phi}_0^2+\overline{\phi}_1^2)\Big),
  \label(~a){eq:A:approx:b}
  \\
  \mathcal{A}(\phi_0,\phi_1) &\approx&
  (\phi_0+\phi_1)\Big(d_1+\overline{\phi}_0\overline{\phi}_1(d_2+d_3\overline{\phi}_0\overline{\phi}_1)+ \\ &&
  \qquad\qquad\quad(\overline{\phi}_0^2+\overline{\phi}_1^2)(d_4+d_5\overline{\phi}_0\overline{\phi}_1)+d_6(\overline{\phi}_0^4+\overline{\phi}_1^4)\Big),
  \label(~b){eq:A:approx:c}
\end{EQA}
%%%
where $\overline{\phi}_0=\phi_0/\pi$, $\overline{\phi}_1=\phi_1/\pi$ and 
%%%
\begin{center}
  \begin{tabular}{c|cccccc}
        & 1 & 2 & 3 & 4 & 5 & 6 \\
    \hline
     $c$
     &  3.070645
     &  0.947923
     & -0.673029
     \\
     $d$
     &  2.989696
     &  0.71622
     & -0.458969
     & -0.502821
     &  0.26106
     & -0.045854 
  \end{tabular}
\end{center}

Using~\eqref{eq:A:approx:a}, \eqref{eq:A:approx:b} or \eqref{eq:A:approx:c} 
as the starting point for Newton-Raphson, the solution for Problem~\ref{prob:1} is found in very few 
iterations.
Computing the solution with Newton-Raphson starting with the proposed guesses
in a $1024\times 1024$ grid for $\phi_0$ and $\phi_1$ ranging in $[-0.9999\pi,0.9999\pi]$ 
with a tolerance of $10^{-10}$, results in the following distribution of iterations:
%%%
\begin{center}
\begin{tabular}{c|ccccc|l}
  \#iter  & 1 & 2 & 3 & 4 & 5 & \\
  \hline
  \#instances & 1025    & 6882    & 238424   & 662268   & 142026  & Guess~\eqref{eq:A:approx:a} \\
          & $0.1\%$ & $0.7\%$ & $22.7\%$ & $63.0\%$ & $13.5\%$ & \\
  \hline
  \#instances & 1025    & 10710    & 702534   & 336356   &         & Guess~\eqref{eq:A:approx:b} \\
          & $0.1\%$ & $1.0\%$ & $66.9\%$ & $32.0\%$ &         & \\
  \hline
  \#instances & 1025    & 34124 & 1015074   & 402 & & Guess~\eqref{eq:A:approx:c} \\
          & $0.1\%$ & $3.2\%$ & $96.6\%$ & $0.1\%$ &  & 
\end{tabular}
\end{center}
%%%

\begin{table}[!bt]
\caption{The fitting algorithm}
\label{tab:algo:complete}
%%%
\begin{function}[H]
  \caption{normalizeAngle($\phi$)}
  \label{alg:normalizeAngle}
  \def\assign{\leftarrow}
  \SetAlgoLined 
  \small
  \lWhile{$\phi>+\pi$}{$\phi\assign\phi-2\pi$}\;
  \lWhile{$\phi<-\pi$}{$\phi\assign\phi+2\pi$}\;
  \Return{$\phi$}\;
\end{function}
%%%
\vspace{-0.9em}
%%%
\begin{function}[H]
  \caption{buildClothoid($x_0$, $y_0$, $\vartheta_0$, $x_1$, $y_1$, $\vartheta_1$, $\epsilon$)}
  \label{alg:buildClothoid}
  \SetKwFunction{findA}{findA}
  \SetKwFunction{Aguess}{Aguess}
  \SetKwFunction{normalizeAngle}{normalizeAngle}
  \def\assign{\leftarrow}
  \small
  \SetAlgoLined 
  $\Delta x \assign x_1-x_0$;\quad
  $\Delta y \assign y_1-y_0$\;
  Compute $r$ and $\varphi$ from $r\cos\varphi=\Delta x$ and $r\sin\varphi=\Delta y$\;
  $\ASSIGNm{\phi_0}\normalizeAngle(\vartheta_0-\varphi)$;\quad
  $\ASSIGNm{\phi_1}\normalizeAngle(\vartheta_1-\varphi)$\;
  Define $g$ as $g(A):=Y(2A,(\phi_1-\phi_0)-A,\phi_0)$\;
  Set $A\assign 3(\phi_1+\phi_0)$;\quad\tcp{In alternative use \eqref{eq:A:approx:b} or \eqref{eq:A:approx:c}}
  \lWhile{$\abs{g(A)}>\epsilon$}{
    $A\assign A - g(A)/g'(A)$
  }\;
  $\ASSIGNs{L}r/X(2A,\delta-A,\phi_0)$;\quad
  $\ASSIGNs{\kappa}(\delta-A)/L$;\quad
  $\ASSIGNs{\kappa'}(2\,A)/L^2$\;
  \Return{$\kappa$, $\kappa'$, $L$}
\end{function}
%%%
\end{table}
%%%%
The complete algorithm for the clothoid computation is written in the function
\reffun{alg:buildClothoid} of Table~\ref{tab:algo:complete}.
This function solves equation \eqref{eq:g}  and 
builds the coefficients of the interpolating clothoid.\\
%%%
The algorithm is extremely compact and was successfully tested in any possible situation.
The accurate computation of the clothoid needs an equally accurate computation of $g(A)$
and $g'(A)$ and thus the accurate computation of Fresnel 
related functions $X(a,b,c)$ and $Y(a,b,c)$ with associated
derivatives.
These functions are a combination of Fresnel and Fresnel 
momenta integrals which are precise for large $a$ and small momenta.
For the computation, only the first two momenta are necessary
so that the inaccuracy for large momenta does not pose any problem.
A different problem is the computation of these integrals for
small values of $\abs{a}$. In this case, laborious expansion are used
to compute the integrals with high accuracy.
The expansions are discussed in section \ref{sec:6}.

\section{Accurate computation of Fresnel momenta}\label{sec:6}

Here it is assumed that the standard Fresnel integrals can be computed 
with high accuracy. For this task one can use algorithms described in 
\citeNP{Snyder:1993}, \citeNP{Smith:2011} and \citeNP{Thompson:1997} 
or using continued fraction expansion as in \citeNP{Backeljauw:2009}
or simply use the available software \cite{NR:2002}.
It is possible to reduce the integrals \eqref{clot} 
to a linear combination of this standard Fresnel integrals \eqref{fresnel}.
However simpler expressions are obtained using also the momenta of the Fresnel integrals:
%%%
\begin{EQ}\label{fresnel:moments}
  \Cf_k(t)=\int_0^t\!\!\!\tau^k\cos \left(\frac{\pi}{2}\tau^2\right)\dtau,\qquad
  \Sf_k(t)=\int_0^t\!\!\!\tau^k\sin \left(\frac{\pi}{2}\tau^2\right)\dtau.\qquad
\end{EQ} 
%%%
Notice that $\Cf(t)\DEF \Cf_0(t)$ and $\Sf(t)\DEF\Sf_0(t)$.
Closed forms via the exponential integral or the Gamma function are also possible,
however we prefer to express them as a recurrence.  
Integrating by parts, the following recurrence is obtained:
%%%
\begin{EQ}[rcl]\label{eq:fresnel:recurrence}
  \Cf_{k+1}(t) & = &
  \dfrac{1}{\pi}\left(
    t^{k}\sin\left(\frac{\pi}{2}t^2\right)-k\,\Sf_{k-1}(t)
    \right),\\
  \Sf_{k+1}(t) & = &
  \dfrac{1}{\pi}\left(
    k\,\Cf_{k-1}(t)-t^{k}\cos\left(\frac{\pi}{2}t^2\right)\right).
\end{EQ} 
%%%
Recurrence is started by computing standard Fresnel integrals~\eqref{fresnel}
and (changing  $z=\tau^2$) the following values are obtained:
%%%
\begin{EQ}
  \Cf_1(t) = \frac{1}{\pi}\sin \left(\frac{\pi}{2}t^2\right),
  \qquad
  \Sf_1(t) = \frac{1}{\pi}\left(1-\cos\left(\frac{\pi}{2}t^2\right)\right).
\end{EQ} 
%%%
Also $\Cf_2(t)$ and $\Sf_2(t)$ are readily obtained:
%%%
\begin{EQ}
  \Cf_2(t) = \dfrac{1}{\pi}\left(t\sin\left(\frac{\pi}{2}t^2\right)-\Sf_0(t)\right),\qquad
  \Sf_2(t) = \dfrac{1}{\pi}\left(\Cf_0(t)-t\cos\left(\frac{\pi}{2}t^2\right)\right).
\end{EQ} 
%%%
Notice that from recurrence~\eqref{eq:fresnel:recurrence} it 
follows that $\Cf_k(t)$ and $\Sf_k(t)$ with $k$ odd 
do not contain Fresnel integrals \eqref{fresnel}
and are combination of elementary functions.

The computation of clothoids relies most on the evaluation of integrals of kind 
\eqref{eq:XY0} with their derivatives. The reduction is possible via a change of variable and integration by parts. It is sufficient to consider two integrals, that cover all possible cases:
%%%
\begin{EQ}[rcl]\label{eq:fresnel:general}
  X_k(a,b,c)&=&\int_0^1 \tau^k\cos\left(\frac{a}{2}\tau^2+b\tau+c\right)\dtau,\\
  Y_k(a,b,c)&=&\int_0^1 \,\tau^k\sin\left(\frac{a}{2}\tau^2+b\tau+c\right)\dtau,
\end{EQ}
%%%
so that
%%%
\begin{EQ}[rcl]
   g(A)  &=& Y_0(2A,\delta-A,\phi_0),\\
   g'(A) &=& X_1(2A,\delta-A,\phi_0)-X_2(2A,\delta-A,\phi_0),
\end{EQ}
%%%
and finally, equation \eqref{clot} can be evaluated as
%%%
\begin{EQ}[rcl]
  x(s) &=& x_0 + s\,X_0(\kappa's^2,\kappa s,\vartheta_0), \\
  y(s) &=& y_0 + s\,Y_0(\kappa's^2,\kappa s,\vartheta_0).
\end{EQ}
%%%
From the trigonometric identities~\eqref{eq:id:sincos},
integrals \eqref{eq:fresnel:general} are rewritten as
%%%
\begin{EQ}[rcl]
  X_k(a,b,c)&=&X_k(a,b,0)\cos c-Y_k(a,b,0)\sin c, \\
  Y_k(a,b,c)&=&X_k(a,b,0)\sin c+Y_k(a,b,0)\cos c.
\end{EQ}
%%%
Defining $X_k(a,b)\DEF X_k(a,b,0)$ and $Y_k(a,b)\DEF Y_k(a,b,0)$
the computation of \eqref{eq:fresnel:general}
is reduced to the computation of $X_k(a,b)$ and $Y_k(a,b)$.
It is convenient to introduce the following quantities 
%%%
\begin{EQ}
  \sigma = \SIGN(a),\qquad
  z =\omega_+-\omega_-=\sigma\frac{\sqrt{\abs{a}}}{\sqrt{\pi}},\qquad
  \omega_-=\frac{b}{\sqrt{\pi\abs{a}}},\qquad
  \eta=-\frac{b^2}{2a},
\end{EQ}
%%%
so that it is possible to rewrite the argument of the trigonometric
functions of $X_k(a,b)$ and $Y_k(a,b)$ as
%%%
\begin{EQ}[rcl]
   \frac{a}{2}\tau^2+b\tau&=&
   \dfrac{\pi}{2}\sigma
   \left(\tau \dfrac{\sigma\sqrt{\abs{a}}}{\sqrt{\pi}}+\frac{b}{\sqrt{\pi\abs{a}}}\right)^2
   -\frac{b^2}{2a}
   =
   \dfrac{\pi}{2}\sigma\big(\tau z+\omega_-\big)^2+\eta.
\end{EQ}
%%%
By using the change of variable $\xi=\tau\,z+\omega_-$ with inverse $\tau= z^{-1}(\xi-\omega_-)$
for $X_k(a,b)$ and the identity~\eqref{eq:id:sincos} we have:
%%%
\begin{EQ}[rcl]\label{eq:X:comput}
  X_k(a,b) &=& 
  z^{-1}\int_{\omega_-}^{\omega_+}
  z^{-k}(\xi-\omega_-)^k\cos\left(\sigma\frac{\pi}{2}\xi^2+\eta\right)\dxi,
  \\
  &=& 
  z^{-k-1}
  \int_{\omega_-}^{\omega_+}  
  \sum_{j=0}^k \binom{k}{j}\,\xi^j(-\omega_-)^{k-j}\cos\left(\frac{\pi}{2}\xi^2+\sigma\eta\right)\dxi,
  \\
  &=& z^{-k-1}\sum_{j=0}^k \binom{k}{j}\,(-\omega_-)^{k-j}\left[\cos\eta \Delta\Cf_j-\sigma\sin\eta\Delta\Sf_j\right],
  \\
  &=&
  \dfrac{\cos\eta}{z^{k+1}}
  \bigg[
  \sum_{j=0}^k \binom{k}{j}(-\omega_-)^{k-j}\Delta\Cf_j
  \bigg]
  -
  \sigma\dfrac{\sin\eta}{z^{k+1}}
  \bigg[
  \sum_{j=0}^k \binom{k}{j}(-\omega_-)^{k-j}\Delta\Sf_j
  \bigg]
\end{EQ}
%%%
where
%%%
\begin{EQ}\label{eq:def:Cj:Sj}
  \Delta\Cf_j= \Cf_j(\omega_+)-\Cf_j(\omega_-), \qquad \Delta\Sf_j= \Sf_j(\omega_+)-\Sf_j(\omega_-),
  \qquad
\end{EQ}
%%%
are the evaluation of the momenta of the Fresnel integrals as defined in \eqref{fresnel:moments}.
Analogously for $Y_k(a,b)$ we have:
%%%
\begin{EQ}\label{eq:Y:comput}
  Y_k(a,b) = 
  \dfrac{\sin\eta}{z^{k+1}}
  \bigg[\sum_{j=0}^k \binom{k}{j}(-\omega_-)^{k-j}\Delta\Cf_j\bigg]
  +
  \sigma\dfrac{\cos\eta}{z^{k+1}}
  \bigg[\sum_{j=0}^k \binom{k}{j}(-\omega_-)^{k-j}\Delta\Sf_j\bigg].\qquad
\end{EQ}
%%%
This computation is inaccurate when $\abs{a}$ is small: in fact $z$ appears in the denominator of several fractions.
For this reason, for small values of $\abs{a}$ we substitute~\eqref{eq:X:comput} and \eqref{eq:Y:comput}
with asymptotic expansions.
Notice that the recurrence \eqref{eq:fresnel:recurrence} is unstable so that it produces inaccurate results for large $k$,
but we need only the first two terms so this 
is not a problem for the computation of $g(A)$ and $g'(A)$.

\subsection{Accurate computation with small parameters}
When the parameter $a$ is small, we use 
identity~\eqref{eq:id:sincos} to 
derive series expansion:
%%%%
\begin{EQ}[rcl]\label{eq:serie:X:asmall}
   X_k(a,b)
   &=&
   \int_0^1\tau^k\cos\left(\frac{a}{2}\tau^2+b\tau\right)\dtau, \\
   &=&
   \int_0^1\tau^k\left[
   \cos\left(\frac{a}{2}\tau^2\right)\cos(b\tau)
   -
   \sin\left(\frac{a}{2}\tau^2\right)\sin(b\tau)
   \right]\dtau,
   \\
   &=&
   \sum_{n=0}^\infty\frac{(-1)^n}{(2n)!}\left(\frac{a}{2}\right)^{2n}
   X_{4n+k}(0,b)
   -
   \sum_{n=0}^\infty\frac{(-1)^n}{(2n+1)!}\left(\frac{a}{2}\right)^{2n+1}
   Y_{4n+2+k}(0,b),
   \\
   &=&
   \sum_{n=0}^\infty\frac{(-1)^n}{(2n)!}\left(\frac{a}{2}\right)^{2n}
   \left[
   X_{4n+k}(0,b)-\dfrac{a\,Y_{4n+2+k}(0,b)}{2(2n+1)}
   \right],
\end{EQ}
%%%%
and analogously using again identity~\eqref{eq:id:sincos}
we have the series expansion
%%%%
\begin{EQA}[rcl]\label{eq:serie:Y:asmall}
   Y_k(a,b) &=&
   \int_0^1\tau^k\sin\left(\frac{a}{2}\tau^2+b\tau\right)\dtau \\
   &=&
   \sum_{n=0}^\infty\frac{(-1)^n}{(2n)!}\left(\frac{a}{2}\right)^{2n}
   \left[
   Y_{4n+k}(0,b)+\dfrac{a\,X_{4n+2+k}(0,b)}{2(2n+1)}
   \right].
\end{EQA}
%%%%
From the inequalities:
%%%
\begin{EQ}
  \abs{X_{k}}\leq \int_{0}^{1}|\tau^{k}|\dtau=\dfrac{1}{k+1},
  \qquad
  \abs{Y_{k}}\leq \int_{0}^{1}|\tau^{k}|\dtau=\dfrac{1}{k+1},
\end{EQ}
%%%
we estimate the remainder for the series of $X_k$:
%%%%
\begin{EQ}[rcl]
   R_{p,k} &=& 
   \abs{
   \sum_{n=p}^\infty\frac{(-1)^n}{(2n)!}\left(\frac{a}{2}\right)^{2n}
   \left[
   X_{4n+k}(0,b)-\dfrac{a\,Y_{4n+2+k}(0,b)}{2(2n+1)}
   \right]}
   \\
   &\leq&
   \sum_{n=p}^\infty\frac{1}{(2n)!}\left(\frac{a}{2}\right)^{2n}
   \left[\dfrac{1}{4n+1}+\dfrac{\abs{a}}{2(2n+1)(4n+3)}
   \right]
   \\
   &\leq&
   \left(\frac{a}{2}\right)^{2p}
   \sum_{n=p}^\infty\frac{1}{(2(n-p))!}\left(\frac{a}{2}\right)^{2(n-p)}
   \\
   &\leq&
   \left(\frac{a}{2}\right)^{2p}
   \sum_{n=0}^\infty\frac{1}{(2n)!}\left(\frac{a}{2}\right)^{2n}
   =
   \left(\frac{a}{2}\right)^{2p}\cosh(a).
\end{EQ}
%%%%
The same estimate is obtained for the series of $Y_k$
%%%
\begin{remark}\label{rem:epsilon}
Both series \eqref{eq:serie:X:asmall} and \eqref{eq:serie:Y:asmall}
converge fast. For example, if $\abs{a}<10^{-4}$ and $p=2$, the error is less than $6.26\cdot 10^{-18}$
while if $p=3$ the error is less than $1.6\cdot 10^{-26}$.
%%%
\end{remark}
It is possible to compute 
$X_k(0,b)$ and $Y_k(0,b)$ as follows:
%%%
\begin{EQ}[rcll]\label{eq:XY:recu:a:small}
   X_0(0,b) &=& b^{-1}\sin b, &\\
   Y_0(0,b) &=& b^{-1}(1-\cos b),&\\
   X_k(0,b) &=& b^{-1}\big(\sin b-k\,Y_{k-1}(0,b)\big),\qquad & k=1,2,\ldots\\
   Y_k(0,b) &=& b^{-1}\big(k\,X_{k-1}(0,b)-\cos b\big),\qquad & k=1,2,\ldots
\end{EQ}
%%%
This recurrence permits the computation of $X_k(0,b)$ and $Y_k(0,b)$ when $b\neq 0$, however, it is readily seen that this recurrence formula is not stable.
As an alternative, an explicit formula based on Lommel function
$s_{\mu,\nu}(z)$ can be used \cite{Shirley:2003}.
The explicit formula is, for $k=1,2,3,\ldots$:
%%%
\begin{EQ}[rcl]\label{eq:int:XY0b}
   X_k(0,b) &=&
   \dfrac{
     k\,s_{k+\frac{1}{2},\frac{3}{2}}(b)\sin b +f(b)s_{k+\frac{3}{2},\frac{1}{2}}(b)
   }{(1+k)b^{k+\frac{1}{2}}}
   + \dfrac{\cos b}{1+k},\qquad
   \\
   Y_k(0,b) &=&
   \dfrac{
     k\,s_{k+\frac{3}{2},\frac{3}{2}}(b)\sin b+g(b)s_{k+\frac{1}{2},\frac{1}{2}}(b)
   }{(2+k)b^{k+\frac{1}{2}}}
   + \dfrac{\sin b}{2+k},
\end{EQ}
%%%
where $f(b) = b^{-1}\sin b-\cos b$ and $g(b)=f(b)(2+k)$. The Lommel function
has the following expansion (see~\url{http://dlmf.nist.gov/11.9} or reference~\cite{Watson:1944})
%%%
\begin{EQ}\label{eq:lommel}
  s_{\mu,\nu}(z) 
  =
  z^{\mu+1}\sum_{n=0}^\infty\dfrac{(-z^2)^n}{\alpha_{n+1}(\mu,\nu)},
  \qquad
  \alpha_n(\mu,\nu)=\prod_{m=1}^n ((\mu+2m-1)^2-\nu^2),\qquad
\end{EQ}
%%%
and using this expansion in~\eqref{eq:int:XY0b} results in the following
explicit formula for $k=1,2,3,\ldots$:
%%%
\begin{EQ}[rcl]
   X_k(0,b) &=&
   A(b)w_{k+\frac{1}{2},\frac{3}{2}}(b)+
   B(b)w_{k+\frac{3}{2},\frac{1}{2}}(b)+
   \dfrac{\cos b}{1+k},\qquad
   \\
   Y_k(0,b) &=&
   C(b)w_{k+\frac{3}{2},\frac{3}{2}}(b)+
   D(b)w_{k+\frac{1}{2},\frac{1}{2}}(b)+
   \dfrac{\sin b}{2+k},\qquad
\end{EQ}
%%%
where
%%%
\begin{EQ}
   w_{\mu,\nu}(b) =
   \sum_{n=0}^\infty\dfrac{(-b^2)^n}{\alpha_{n+1}(\mu,\nu)}, \quad
   A(b) = \dfrac{kb\sin b}{1+k},\quad
   B(b) = \dfrac{(\sin b-b\cos b)b}{1+k},\\
   C(b) = -\dfrac{b^2\sin b}{2+k},\quad
   D(b) = \sin b-b\cos b.
\end{EQ}
%%%

\section{Numerical tests}
\label{sec:7}

The algorithm was implemented and tested in MATLAB (2009 version).
For the Fresnel integrals computation we use the script of 
\citeN{Telasula:2005}.
The first six tests are taken form \citeN{Walton:2008} and a MATLAB implementation
of the algorithm described in the reference is used for comparison.
%%%
\begin{description}
  \item[Test 1] $(x_0,y_0)=(5,4)$, $(x_1,y_1)=(5,6)$, $\vartheta_0=\pi/3$,   $\vartheta_1=7\pi/6$;
  \item[Test 2] $(x_0,y_0)=(3,5)$, $(x_1,y_1)=(6,5)$, $\vartheta_0=2.14676$, $\vartheta_1=2.86234$;
  \item[Test 3] $(x_0,y_0)=(3,6)$, $(x_1,y_1)=(6,6)$, $\vartheta_0=3.05433$, $\vartheta_1=3.14159$;
  \item[Test 4] $(x_0,y_0)=(3,6)$, $(x_1,y_1)=(6,6)$, $\vartheta_0=0.08727$, $\vartheta_1=3.05433$;
  \item[Test 5] $(x_0,y_0)=(5,4)$, $(x_1,y_1)=(4,5)$, $\vartheta_0=0.34907$, $\vartheta_1=4.48550$;
  \item[Test 6] $(x_0,y_0)=(4,4)$, $(x_1,y_1)=(5,5)$, $\vartheta_0=0.52360$, $\vartheta_1=4.66003.$
\end{description}
%%%
The accuracy of fit as in \citeN{Walton:2008} 
is determined by comparing the ending point as computed by both
methods, with the given ending point.
For all the tests, both methods have an error which does not exceed $10^{-15}$ or less.
Also iterations are comparable and are reported in the following table.
%%%
\begin{center}
\begin{tabular}{l|cccccc}
  \#test & 1 & 2 & 3 & 4 & 5 & 6 \\
  \hline
  \#iter present method & 3 & 3 & 3 & 3 & 3 & 3 \\
  \#iter \citeN{Walton:2008} & 5 & 4 & 4 & 4 & 5 & 4 \\
\end{tabular}
\end{center}
%%%
\begin{figure}[!b]
  \begin{center}
    \includegraphics[scale=0.5]{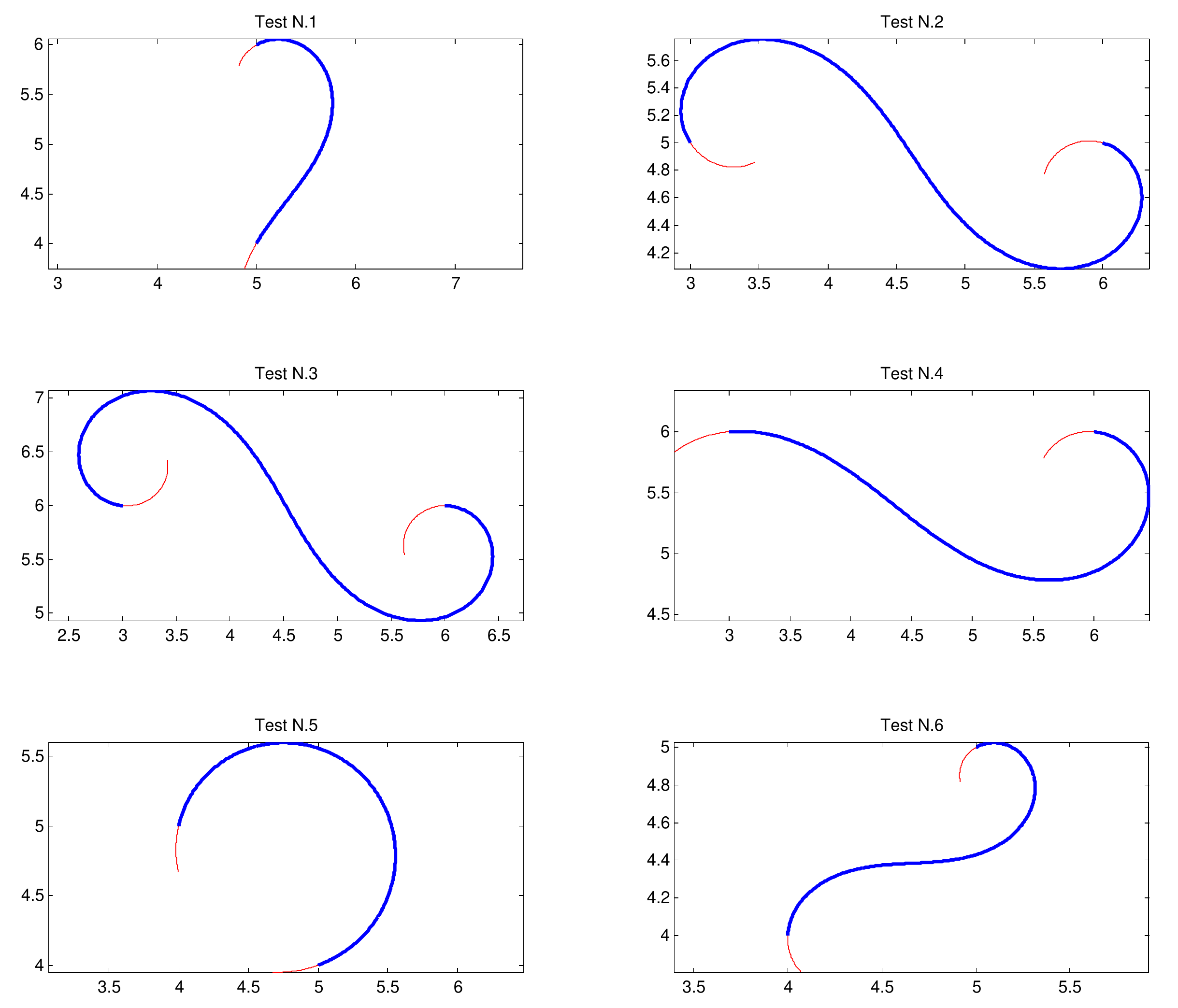}
  \end{center}
  \caption{Results of test N.1 up to test N.6.}
%  \label{fig:1and2}
\end{figure}
%%%
The difference of the present method compared with the algorithm of \citeN{Walton:2008}
are in the transition zone where the solution is close to be a circle arc or a segment.
In fact in this situation the present method performs better without loosing  accuracy
or increasing in iterations as for the Meek and Walton algorithm.
The following tests highlight the differences:
%%%
\begin{description}
  \item[Test 7] $(x_0,y_0)=(0,0)$, $(x_1,y_1)=(100,0)$,
                $\vartheta_0=0.01\,\cdot\,2^{-k}$, $\vartheta_1=-0.02\,\cdot\,2^{-k}$;
                for $k=1,2,\ldots,10$;
  \item[Test 8] $(x_0,y_0)=(0,-100)$, $(x_1,y_1)=(-100,0)$, \\
                $\vartheta_0=0.00011\,\cdot\,2^{-k}$, $\vartheta_1=\frac{3}{2}\pi-0.0001\,\cdot\,2^{-k}$
                for $k=1,2,\ldots,10$.
\end{description}
%%%
Table~\ref{tab:7and8} collects the results.
The error is computed as the maximum of the norm of the
differences at the ending point as computed by the algorithm
with the given ending point. The tolerance used for the Newton 
iterative solver for both the algorithms is $\numprint{1E-12}$.
Improving the tolerance does not improve the error for 
both the methods.
Notice that the proposed algorithm computes the solution
with constant accuracy and few iterations while Meek and Walton algorithm
loose precision and uses more iterations.
The $\infty$ symbol for iterations in Table~\ref{tab:7and8}  means that 
Newton method do not reach the required accuracy and the solution
is computed using the last computed values. The iteration limit used
was $100$, increasing this limit to $1000$ did not change the results.
%%%
\begin{table}[!htcb]
\caption{Test \#7 and \#8 results}
\label{tab:7and8}
\begin{center}
\begin{tabular}{|l||c|r||c|r||c|r||c|r|}
  \hline
  & \multicolumn{4}{c||}{Test \#7} &\multicolumn{4}{c|}{Test \#8} \\
  & \multicolumn{2}{c}{Proposed method} 
  & \multicolumn{2}{c||}{Meek \& Walton}
  & \multicolumn{2}{c}{Proposed method} 
  & \multicolumn{2}{c|}{Meek \& Walton}
  \\
  \hline
  $k$ 
  & iter & \multicolumn{1}{c||}{Error} & iter & \multicolumn{1}{c||}{Error}
  & iter & \multicolumn{1}{c||}{Error} & iter & \multicolumn{1}{c|}{Error} 
  \\
  \hline
  1  & $2$ & \numprint{2.6e-16}  & $30$ & \numprint{1.83e-6} & $3$ & \numprint{3.18e-14} & $\infty$ & \numprint{3.76e-9} \\
  2  & $2$ & \numprint{1.42e-14} & $29$ & \numprint{1.85e-6} & $3$ & \numprint{2.01e-14} & $\infty$ & \numprint{1.45e-8} \\
  3  & $3$ & 0                   & $28$ & \numprint{1.38e-6} & $2$ & \numprint{2.01e-14} & $\infty$ & \numprint{7.47e-8} \\
  4  & $2$ & \numprint{4.33e-17} & $27$ & \numprint{9.83e-7} & $2$ & \numprint{2.84e-14} & $\infty$ & \numprint{3.47e-8} \\
  5  & $2$ & \numprint{5.42e-18} & $26$ & \numprint{6.96e-7} & $2$ & \numprint{0}        & $\infty$ & \numprint{1.07e-9} \\
  6  & $2$ & 0                   & $25$ & \numprint{4.92e-7} & $2$ & \numprint{1.42e-14} & $\infty$ & \numprint{5.53e-9} \\
  7  & $2$ & \numprint{1.35e-18} & $24$ & \numprint{3.48e-7} & $2$ & \numprint{5.12e-14} & $\infty$ & \numprint{2.43e-7} \\
  8  & $2$ & 0                   & $23$ & \numprint{2.46e-7} & $2$ & \numprint{0}        & $\infty$ & \numprint{3.09e-6} \\
  9  & $2$ & 0                   & $22$ & \numprint{1.74e-7} & $2$ & \numprint{0}        & $\infty$ & \numprint{3.25e-6} \\
  10 & $2$ & 0                   & $21$ & \numprint{1.23e-7} & $2$ & \numprint{5.12e-14} & $\infty$ & \numprint{4.84e-7} \\
  \hline
\end{tabular}
\end{center}
\end{table}
%%%
In Table~\ref{tab:7and8}, the algorithm of \citeN{Walton:2008} has a large number of iteration respect to the 
proposed method. To understand this behavior, it is recalled that in solving a general nonlinear function $f(\theta)=0$ using Newton-Raphson method,
the error $\epsilon_k = \theta_k-\alpha$ near the root $\theta=\alpha$ satisfies:
%%%%
\begin{EQ}
   \epsilon_{k+1} \approx C\epsilon_{k}^2, \qquad C = -\dfrac{f''(\alpha)}{2f'(\alpha)}.
\end{EQ}
%%%%
showing the quadratic behavior of Newton-Raphson method; 
large values of $C$ reflects a slow convergence.
If $f(\theta)$ is the function used by \citeN{Walton:2009} for computing 
the clothoid curve and $g(A)$ is the function of the proposed method then
%%%
\begin{EQ}
  f(\theta(A))\dfrac{\sqrt{A}}{\sqrt{2\pi}}=g(A),\qquad \theta(A)=\dfrac{(\delta-A)^2}{4A},
\end{EQ}
%%%
and the error $e_k = A_k-A^{\star}$ near the root $\theta(A^{\star})=\alpha$ satisfy:
%%%%
\begin{EQ}
   e_{k+1} \approx C^{\star}e_{k}^2, \qquad C^\star = -\dfrac{g''(A^{\star})}{2g'(A^{\star})}.
\end{EQ}
%%%%
If $g(A^\star)=0$ and $A^\star>0$ but small (i.e. $A^\star\approx 0$)
then $\theta(A)\approx\delta^2/(4A)$ near $A^\star$,
moreover
%%%
\begin{EQ}
   C^\star\approx \dfrac{2(A^\star)^2}{\delta^2}\dfrac{f''(\alpha)}{f'(\alpha)},
   \qquad\Rightarrow\qquad
   C\approx-\dfrac{\delta^2}{4(A^\star)^2}C^\star,
\end{EQ}
%%%
thus if the ratio $\delta/A^\star$ is large then $\abs{C}\gg\abs{C^\star}$
and thus \citeN{Walton:2008} algorithm is slower than the proposed one. This
is verified in Table~\ref{tab:7and8}. Notice that Test \#7 has constant $C$
lower than Test \#7 and this is reflected in the iteration counts.
When the ratio $\delta/A^\star$  is small then $\abs{C}\ll\abs{C^\star}$ 
and the algorithm of Meek \& Walton should be faster. An estimation based on Taylor
expansion shows that when $\delta/A^\star$ is small then $C^\star$ is small too.
Thus the proposed algorithm do not suffer of slow convergence as verified experimentally.
%%%

\section{Conclusions}

An effective solution to the problem of Hermite $G^1$ interpolation 
with a clothoid curve was presented.
We solve the $G^1$ Hermite interpolation problem in a \emph{new and complete} way.

In our approach we do not need the decomposition in mutually exclusive states,
that, numerically, introduces instabilities and inaccuracies as showed in section~\ref{sec:7}.
The solution of the interpolation problem 
is uniformly accurate even when close to a straight line or an arc of circle,
as pointed out in section~\ref{sec:7}, this was not the case of algorithms found in literature.
In fact, even in domains where other algorithms solve the problem, 
we perform better in terms of accuracy and number of iterations.
The interpolation problem was reduced to one single function in one variable
making the present algorithm fast and robust.
Existence and uniqueness of the problem was discussed and proved in section~\ref{sec:existence}. 
A guess functions which allows to find that zero with 
very few  iterations in all possible configurations was provided.
Asymptotic expansions near critical values for Fresnel related integrals 
are provided to keep uniform accuracy. 
Implementation details of the proposed algorithm are given
in the appendix using pseudocode and can be easily 
translated in any programming language.

\appendix
\section{Algorithms for the computation of Fresnel related integrals}
We present here the algorithmic version of the analytical expression we derived in 
Section~\ref{sec:6} and~\ref{sec:7}.
These algorithms are necessary for the computation of 
the main function \reffun{alg:buildClothoid} of Section~\ref{sec:5} which takes the input data ($x_0$, $y_0$, $\vartheta_0$, $x_1$, $y_1$, $\vartheta_1$) and returns the parameters ($\kappa$, $\kappa'$, $L$) that solve the problem as expressed in equation \eqref{clot}.
Function \reffun{alg:evalXY}
computes the generalized Fresnel integrals \eqref{eq:fresnel:general}.
It distinguishes the cases of $a$ larger or smaller than a threshold $\varepsilon$.
The value of  $\varepsilon$ is discussed in Section~\ref{sec:7},
see for example Remark \ref{rem:epsilon}. 
Formulas~\eqref{eq:X:comput}-\eqref{eq:Y:comput},
used to compute $X_k(a,b)$ and $Y_k(a,b)$ at arbitrary precision when 
$\abs{a}\geq\varepsilon$, are implemented in function \reffun{alg:evalXYaLarge}.
Formulas~\eqref{eq:serie:X:asmall}-\eqref{eq:serie:Y:asmall},
used to compute $X_k(a,b)$ and $Y_k(a,b)$ at arbitrary precision when 
$\abs{a}<\varepsilon$, are implemented in function~\reffun{alg:evalXYaSmall}.
This function requires computation of \eqref{eq:int:XY0b}
implemented in function \reffun{alg:evalXYaZero}
which needs (reduced) Lommel function~\eqref{eq:lommel} implemented in 
function \reffun{alg:S}.

%%%
\begin{function}[H]
  \caption{evalXY($a$, $b$, $c$, $k$)}
  \label{alg:evalXY}
  \def\assign{\leftarrow}
  \small
  \SetAlgoLined
  \lIf{$\abs{a}<\varepsilon$}{
    $\hat X,\hat Y\assign\;$\ref{alg:evalXYaSmall}($a$,$b$,$k$,$5$)
  }
  \lElse{
    \qquad\qquad $\;\;\,\hat X,\hat Y\assign\;$\ref{alg:evalXYaLarge}($a$,$b$,$k$)
  }
  \lFor{$j=0,1,\ldots,k-1$}{
    $\ASSIGNs{X_{j}}\hat X_j\cos c-\hat Y_j\sin c$;\quad
    $\ASSIGNs{Y_{j}}\hat X_j\sin c+\hat Y_j\cos c$
  }
  \Return{$X$, $Y$}
\end{function}
%%%
\begin{function}[H]
  \caption{evalFresnelMomenta($t$, $k$)}
  \label{alg:evalFresnelMomenta}
  \def\assign{\leftarrow}
  \small
  \SetAlgoLined
  $C_0\assign \Cf(t)$;\quad
  $S_0\assign \Sf(t)$;\quad
  $z\assign \pi (t^2/2)$\quad
  $c\assign \cos z$\quad
  $s\assign \sin z$\;
  \lIf{$k>1$}{
    $C_1\assign s/\pi$;\quad
    $S_1\assign (1-c)/\pi$
  }
  \lIf{$k>2$}{
    $C_2\assign (t\,s-S_0)/\pi$;\quad
    $S_2\assign (C_0-t\,c)/\pi$
  }
  \Return{$C$, $S$}
\end{function}
%%%

%%%
\begin{function}[H]
  \caption{evalXYaLarge($a$, $b$, $k$)}
  \label{alg:evalXYaLarge}
  \def\assign{\leftarrow}
  \small
  \SetAlgoLined
  $s\assign\dfrac{a}{\abs{a}}$;
  $\;z\assign\sqrt{\abs{a}/\pi}$;
  $\;\ell\assign\dfrac{s\,b}{z\,\pi}$;
  $\;\gamma\assign-\dfrac{sb^2}{2\abs{a}}$;
  $\;s_\gamma\assign\sin\gamma$;
  $\;c_\gamma\assign\cos\gamma$\;
  $C^+,S^+\assign$\ref{alg:evalFresnelMomenta}($\ell+z,k$);\quad
  $\;C^-,S^-\assign$\ref{alg:evalFresnelMomenta}($z,k$)\;
  $\Delta\Cf\assign C^+-C^-$;
  $\;\Delta\Sf\assign S^+-S^-$\;
  $X_0\assign z^{-1}\left(c_\gamma\,\Delta\Cf_0-s\,s_\gamma\,\Delta\Sf_0\right)$;
  $\;Y_0\assign z^{-1}\left(s_\gamma\,\Delta\Cf_0+s\,c_\gamma\,\Delta\Sf_0\right)$\;
  \If{$k>1$}{
    $d_c\assign \Delta\Cf_1-\ell\Delta\Cf_0$;\quad
    $d_s\assign \Delta\Sf_1-\ell\Delta\Sf_0$\;
    $X_1\assign z^{-2}\left(c_\gamma\,d_c-s\,s_\gamma\,d_s\right)$;\quad
    $Y_1\assign z^{-2}\left(s_\gamma\,d_c+s\,c_\gamma\,d_s\right)$\;
  }
  \If{$k>1$}{
    $d_c\assign \Delta\Cf_2+\ell(\ell\Delta\Cf_0-2\Delta\Cf_1)$;\quad
    $d_s\assign \Delta\Sf_2+\ell(\ell\Delta\Sf_0-2\Delta\Sf_1)$\;
    $X_2\assign z^{-3}\left(c_\gamma\,d_c-s\,s_\gamma\,d_s\right)$;\quad
    $Y_2\assign z^{-3}\left(s_\gamma\,d_c+s\,c_\gamma\,d_s\right)$\;
  }
  \Return{$X$, $Y$}
\end{function}
%%%

\begin{function}[H]
  \caption{evalXYaZero($b$, $k$)}
  \label{alg:evalXYaZero}
  \def\assign{\leftarrow}
  \small
  \SetAlgoLined
  \uIf{$\abs{b}<\varepsilon$}{
    $X_0\assign1-(b^2/6)\left(1-b^2/20\right)$;\quad
    $Y_0\assign (b^2/2)\left(1-(b^2/6)\left(1-b^2/30\right)\right)$\;
  }
  \Else{
    $\ASSIGNs{X_0}(\sin b)/b$;\quad
    $\ASSIGNs{Y_0}(1-\cos b)/b$\;
  } 
  $\ASSIGNs{A}b\sin b$;\quad
  $\ASSIGNs{D}\sin b-b\cos b$;\quad
  $\ASSIGNs{B}bD$;\quad
  $\ASSIGNs{C}-b^2\sin b$\;
  \For{$k=0,1,\ldots,k-1$}{
    $\ASSIGNl{X_{k+1}}
    \dfrac{1}{1+k}\left(
    kA\,\reffun{alg:S}\big(k+\frac12,\frac32,b\big) +
                         B\,\reffun{alg:S}\big(k+\frac32,\frac12,b\big) + \cos b\right)$\;
    $\ASSIGNl{Y_{k+1}}
    \dfrac{1}{2+k}\left(
    C\,\reffun{alg:S}\big(k+\frac32,\frac32,b\big) + \sin b\right)
     + D\,\reffun{alg:S}\Big(k+\frac12,\frac12,b\Big)$\;
  }
  \Return{$X$, $Y$}
\end{function}
%%%

\begin{function}[H]
  \caption{evalXYaSmall($a$, $b$, $k$, $p$)}
  \label{alg:evalXYaSmall}
  \SetKwFunction{findA}{findA}
  \SetKwFunction{Aguess}{Aguess}
  \SetKwFunction{normalizeAngle}{normalizeAngle}
  \def\assign{\leftarrow}
  \small
  \SetAlgoLined
  $\hat X,\hat Y\assign$~\ref{alg:evalXYaZero}$(b, k+4p+2)$;\quad
  $t\assign 1$\;
  \lFor{$j=0,1,\ldots,k-1$}{
    $\ASSIGNs{X_j}X^0_j-\dfrac{a}{2}Y^0_{j+2}$;\quad
    $\ASSIGNs{Y_j}Y^0_j+\dfrac{a}{2}X^0_{j+2}$
  }
  \For{$n=1,2,\ldots,p$}{
    $t\assign(-t\,a^2)/(16n(2n-1))$;\quad $s\assign a/(4n+2)$\;
    \For{$j=0,1,\ldots,k-1$}{
      $X_j\assign X_j + t(\hat X_{4n+j}-s\,\hat Y_{4n+j+2})$\;
      $Y_j\assign Y_j + t(\hat Y_{4n+j}+s\,\hat X_{4n+j+2})$;
    }
  }
  \Return{$X$, $Y$}
\end{function}

\begin{function}[H]
  \caption{rLommel($\mu$, $\nu$, $b$)}
  \label{alg:S}
  \SetKw{AND}{and}
  \SetKw{BREAK}{break}
  \def\assign{\leftarrow}
  \small
  \SetAlgoLined

  $\ASSIGNs{t}(\mu+\nu+1)^{-1}(\mu-\nu+1)^{-1}$;\quad
  $\ASSIGNs{r}t$;\quad$n\assign 1$;\quad$\varepsilon\assign 10^{-50}$\;
  \lWhile{$\abs{t}>\varepsilon\abs{r}$}{
    $t\assign t \dfrac{(-b)}{2n+\mu-\nu+1}\dfrac{b}{2n+\mu+\nu+1}$;\quad
    $r\assign r+t$;\quad
    $n\assign n+1$
  }
  \Return{$r$}
\end{function}
%%%

%%%
%\newpage
%%%
\bibliographystyle{acmtrans} % global bibliography 
\bibliography{Clothoid_Spirals-refs} % global bibliography 

\end{document}